\documentclass[12pt,reqno]{amsart}
\topmargin -1cm \textheight 9.5in
\usepackage{cite}

\usepackage{verbatim,amscd,amssymb}
\usepackage{graphicx}
\usepackage{amsmath}

\usepackage{inputenc}

\usepackage{enumitem}
\usepackage{float}

\usepackage[active]{srcltx}

\graphicspath{ {./images/} }

\usepackage{fontenc}

\usepackage{float}

\usepackage[color, notref, notcite]{showkeys}
\definecolor{refkey}{gray}{.5}
\definecolor{labelkey}{gray}{.5}

\newtheorem{theorem}{Theorem}[section]
\newtheorem{lemma}[theorem]{Lemma}
\newtheorem{cor}[theorem]{Corollary}

\theoremstyle{definition}
\newtheorem{definition}[theorem]{Definition}

\theoremstyle{remark}

\numberwithin{equation}{section}

\newcommand{\R}{\mathbb R}
\newcommand{\uc}{\mathbb{S}}

\newcommand{\orp}{\mathrm{orp}}

\newcommand{\sha}{\succ\mkern-14mu_s\;}

\newcommand{\oa}{\overrightarrow}

\begin{document}
	
	\date{\today}
	
	\title[Monotonicity of the over-rotation intervals]{Monotonicity of the over-rotation intervals for bimodal maps}

	\author{Sourav Bhattacharya}
	
	\author{Alexander Blokh}
	
	\address[Sourav Bhattacharaya and Alexander Blokh]
	{Department of Mathematics\\ University of Alabama at Birmingham\\
		Birmingham, AL 35294}
	\email{Sourav Bhattacharya :sourav@uab.edu}
	\email{Alexander Blokh :ablokh@math.uab.edu}
	
	\subjclass[2010]{Primary 37E05, 37E15; Secondary 37E45}
	
	\keywords{Over-rotation pair, over-rotation number, pattern, periodic orbit}
	
	\begin{abstract}
		We show that the connectedness of the set of parameters for which the \emph{over-rotation interval}
		of a bimodal interval map is constant. In other words, the over-rotation interval is a monotone function of a bimodal interval map. 	
	\end{abstract}
	
	\maketitle
	
	\section{INTRODUCTION}
	
	One-dimensional combinatorial dynamics started when O. M. Sharkovsky proved his theorem on
	the coexistence of periods of cycles for interval maps. To state it, we recall the (transitive)
	\emph{Sharkovsky order} of the set natural numbers:
	\begin{multline*}
		3\sha 5\sha 7\sha\dots\sha 2\cdot3\sha 2\cdot5\sha 2\cdot7\sha\\ \dots
		\sha 2^2\cdot3\sha 2^2\cdot5\sha 2^2\cdot7\sha\dots\sha 2^2\sha 2\sha 1.
	\end{multline*}
	In what follows by the \emph{period} we mean the \emph{minimal} period.
	Below $I$ always denotes a closed interval.
	
	\begin{theorem}[\cite{S} and~\cite{shatr} for English translation]\label{t:shar}
		If $g:I\to I$ is continuous, $m\sha n$ and $m$ is the period of cycle of $g$
		then $n$ is also the period of a cycle of $g$.
	\end{theorem}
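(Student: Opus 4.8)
The plan is to give the classical combinatorial argument, organized around \v{S}tefan cycles and the \emph{covering graph} of a periodic orbit. For closed subintervals $J,K\subseteq I$ write $J\to K$ when $K\subseteq g(J)$; by the intermediate value theorem this produces a closed $J'\subseteq J$ with $g(J')=K$. The foundational step is the \emph{Loop Lemma}: every loop $J_0\to J_1\to\cdots\to J_{k-1}\to J_0$ of closed intervals yields a point $x\in J_0$ with $g^k(x)=x$ and $g^i(x)\in J_i$ for all $i$; together with a bookkeeping lemma saying when such a loop forces the \emph{minimal} period to be exactly $k$ (cleanly so when the $J_i$ have pairwise disjoint interiors and the index sequence of the loop is aperiodic), this reduces the theorem to exhibiting appropriate loops.

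Next, given a cycle $P=\{p_1<\cdots<p_m\}$ of period $m\ge 2$, take the basic intervals $I_j=[p_j,p_{j+1}]$ and form the directed graph on the vertices $I_1,\dots,I_{m-1}$ with an edge $I_j\to I_\ell$ whenever $I_\ell\subseteq g(I_j)$. Because $g$ permutes the points of $P$, the interval $I_j$ covers every basic interval lying between $g(p_j)$ and $g(p_{j+1})$; a first consequence, taking the largest $j$ with $g(p_j)>p_j$, is that $I_j\to I_j$, so $g$ has a fixed point and period $1$ always occurs.

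The heart of the matter is the \v{S}tefan cycle. If $g$ has some odd period exceeding $1$, replace $m$ by the smallest such (this is legitimate: the smallest odd period $>1$ precedes the original $m$, and hence every required $n$, in the Sharkovsky order); then $P$ admits no splitting into two halves interchanged by $g$, and one proves that after relabeling the orbit in \v{S}tefan's zigzag order the covering graph contains the self-loop $I_1\to I_1$, the spanning cycle $I_1\to I_2\to\cdots\to I_{m-1}\to I_1$, and the chords $I_{m-1}\to I_j$ for all odd $j$. Feeding suitable subloops into the Loop Lemma --- the self-loop iterated; the ``wrap'' loops $I_{m-2r}\to I_{m-2r+1}\to\cdots\to I_{m-1}\to I_{m-2r}$ of length $2r$; and the spanning cycle with a few self-loops prepended --- and checking periods via the bookkeeping lemma yields cycles of every period $n$ with $m\sha n$, i.e.\ all even $n$ and all odd $n>m$ (with $n=m$ witnessed by $P$ itself). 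Finally, when $m=2^sq$ with $q>1$ odd, and when $m=2^s$, one invokes the doubling reduction: pass to $g^{2^s}$ (respectively iterate $g\mapsto g^2$) on a suitable invariant subinterval, where a period-$m$ orbit descends to a period-$q$ (respectively period-$2^{s-1}$) orbit, apply the odd case (respectively induct on $s$), and translate periods back to $g$.

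The main obstacle is precisely the \v{S}tefan-graph construction: showing that the covering graph of a minimal odd-period orbit is \emph{forced} to contain that self-loop, spanning cycle, and the chords $I_{m-1}\to I_j$. This rests on the ``no division'' property of minimal odd orbits together with a delicate ordering of the basic intervals (the zigzag relabeling), and it comes with the fussy but routine task of confirming that the extracted subloops realize the stated minimal periods and not proper divisors. Everything else --- the covering setup, the Loop Lemma, and the even-case reductions --- is comparatively routine.
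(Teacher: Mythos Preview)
The paper does not prove Theorem~\ref{t:shar}; it is stated with a citation to Sharkovsky's original work \cite{S,shatr} and is used purely as background. So there is no ``paper's own proof'' to compare against.

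That said, your outline is the standard combinatorial proof (essentially the Block--Guckenheimer--Misiurewicz--Young / \v{S}tefan argument, as presented for instance in \cite{alm00}), and the strategy is sound. A couple of points deserve tightening if you write it out in full. First, the reduction in the even case is stated a bit loosely: passing to $g^{2^s}$ on ``a suitable invariant subinterval'' is not quite how the argument usually goes, since a period-$m$ orbit of $g$ need not sit inside a single $g^{2^s}$-invariant interval on which it has period $q$. The cleaner route is the block-structure lemma: if $g$ has a cycle of period $2k$, then either $g$ already has cycles of all even periods, or the orbit admits a division into two blocks swapped by $g$, in which case $g^2$ restricted to the convex hull of one block has a cycle of period $k$; one then inducts. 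Second, in the \v{S}tefan step you replace $m$ by the smallest odd period exceeding $1$; you should note explicitly why this is harmless for the conclusion (every $n$ with the original $m\sha n$ also satisfies $m'\sha n$ for the new, Sharkovsky-larger $m'$). With those two clarifications the plan is complete and correct.
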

	
	Theorem \ref{t:shar} inspired a lot of developments. One of them is the discovery of
	\emph{over-rotation numbers}, a combinatorial tool used for classifying interval cycles and
	invariant under topological conjugacy of interval maps. It can be defined as follows.
	
	Let $I$ be the unit interval, let $f:I\to I$ be a continuous interval map, let
	$P$ be a periodic orbit of $f$ of period $q>1$, and let $m$ be the number of
	points $x \in P$ such that $f(x) -x$ and $f^2(x)-x$ have different signs.
	Call the pair $(\frac{m}{2},q)=\orp(P)$ the \emph{over-rotation pair} of $P$ and
	denote it by $orp(P)$; call $\frac{m}{2q}=\rho(P)$ the \emph{over-rotation number}
	of $P$. An over-rotation pair $(p,q)$ is \emph{coprime} if $p$ and
	$q$ are coprime.
	The set of \emph{over-rotation pairs} of all cycles of
	$f$ is denoted by $ORP(f)$. Since the number $0<m\le \frac{q}{2}$
	is even, then, in an \emph{over-rotation pair} $(p,q)$, $p$ and $q$ are
	integers and $0<\frac{p}{q}\le \frac{1}{2}$. The number $p$ can
	be interpreted as the number of times $f(x)$ goes around $x$ as we move
	along the orbit of the ``vector'' $\oa{x f(x)}$.
	If $f$ has a
	unique fixed point $a$, then $f(x)>x$ if $x<a$ and $f(x)<x $ if
	$x>a$, and $p$ is the number of points to the right of $a$ which are
	mapped to the left of $a$ (or vice versa).
	
	\begin{theorem}[\cite{BM1}]\label{overrotation:pairs}
		Suppose that $(p,q)$ and $(r,s)$ are \emph{over-rotation pairs}. Moreover, suppose that one
		of the following holds.
		
		\begin{enumerate}
			
			\item  $\frac{p}{q}< \frac{r}{s}$.
			
			\item $\frac{p}{q}=\frac{r}{s}$ so that for a coprime over-rotation pair $(k, l)$
			we have $p/k=q/l=u$ and $r/k=s/l=v$ are integers, and $u\sha v$.
			
		\end{enumerate}
		
		Then any interval map with a cycle of over-rotation
		pair $(p,q)$ has a cycle of over-rotation pair $(r,s)$.
	\end{theorem}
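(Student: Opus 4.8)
The plan is to reduce the statement to a combinatorial assertion about loops in a Markov graph, and then to get case~(2) from Sharkovsky's theorem (Theorem~\ref{t:shar}) and case~(1) from a ``rotational horseshoe''. First I would replace an arbitrary map $f$ carrying a cycle $P$ with $\orp(P)=(p,q)$ by the $P$-linear (``connect-the-dots'') map $g$, which agrees with $f$ on $P$ and is affine on the complementary intervals; by the usual minimality of $P$-linear models it suffices to produce the required cycle of over-rotation pair $(r,s)$ for $g$. After a routine reduction (a monotone semiconjugacy collapsing the part of the domain not involved in the rotational behaviour of $P$) I would further assume $g$ has a single fixed point $a$, so that $p$ counts the points of $P$ that $g$ carries to the opposite side of $a$, split evenly into left-to-right and right-to-left crossings.

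Next I would set up the Markov graph $G=G(g,P)$ --- vertices the $q-1$ gaps of $P$, an arrow $U\to V$ whenever $g(U)\supseteq V$ --- together with the classical dictionary between loops in $G$ and cycles of $g$, refined so as to record rotation. The key fact to verify carefully is that a primitive loop $\gamma=U_0\to\cdots\to U_{n-1}\to U_0$ yields a cycle of $g$ of period $n$ with over-rotation pair $\bigl(c(\gamma)/2,\,n\bigr)$, where $c(\gamma)$ is the number of steps of $\gamma$ along which the associated cycle crosses $a$ (equivalently, at which the sign of $g(x)-x$ changes), and that $P$ is the cycle of such a loop $\gamma_0$ of length $q$ with $c(\gamma_0)=2p$. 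From this point the whole problem is the construction of loops in $G$ of prescribed crossing count and length.

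For case~(2), with $p/q=r/s=k/l$ coprime, $u=q/l$, $v=s/l$, $u\sha v$, I would pass to an induced (first-return-type) interval map $F$ on a fundamental domain for the $k/l$-rotation, under which a loop of rotation number $k/l$ and length $jl$ corresponds to an $F$-cycle of period $j$; then $\gamma_0$ gives $F$ a period-$u$ cycle, Theorem~\ref{t:shar} produces a period-$v$ cycle of $F$, and unwinding gives the $g$-cycle of over-rotation pair $(vk,vl)=(r,s)$. For case~(1), with $p/q<r/s$, I would use that $\rho(P)<1/2$ forces a ``rotational horseshoe'' inside $G$ (enlarged, if need be, by the always-present period-two cycle of $g$, which has over-rotation pair $(1,2)$): a family of sub-loops through a common vertex whose free concatenations realise every rotation number in $(p/q,1/2]$ with enough freedom in the length. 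Choosing the multiplicities so that the produced primitive loop gives a cycle whose over-rotation pair is the coprime class of $(r,s)$ taken with a multiple that forces $\gcd(r,s)$ in the Sharkovsky order, I then invoke case~(2) to descend to $(r,s)$ itself (the few cases with $r/s=1/2$ or small $s$ being checked directly).

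The hardest part will be this last construction. Matching the rotation number is easy; the work is to show that the concatenated loop is genuinely primitive, that its crossing count is exactly what the mediant bookkeeping predicts, and --- the real difficulty --- that one can steer the over-rotation pair of the resulting cycle to a \emph{convenient} multiple of the coprime class (rather than an arbitrary one), so that case~(2) applies and lands on $(r,s)$ on the nose. This bookkeeping is where the coprimality hypothesis of~(2) is essential and where the reduction to a single fixed point pays off; once the loops are pinned down, the mediant step for~(1) and the Sharkovsky step for~(2) finish the proof.
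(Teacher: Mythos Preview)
The paper does not prove this theorem at all: it is stated with the citation \cite{BM1} and used as a black box, so there is no ``paper's own proof'' to compare against. Your sketch is therefore not competing with anything in the manuscript.

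As a standalone outline of the Blokh--Misiurewicz argument your plan is broadly on the right track --- the reduction to the $P$-linear model, the Markov graph dictionary between loops and cycles, and the interpretation of the over-rotation pair via crossing counts are indeed the standard framework. Two cautions, though. First, the reduction to a map with a \emph{single} fixed point is not as ``routine'' as you suggest: collapsing intervals of fixed points can change the combinatorics of a cycle, and one has to check that the over-rotation pair survives the semiconjugacy; in the original treatment this is handled carefully rather than hand-waved. Second, your case~(2) plan (pass to an induced first-return map on a ``fundamental domain for the $k/l$-rotation'' and invoke Sharkovsky) is more delicate than you indicate: there is no genuine circle rotation here, only an interval map, and you have to build the induced system so that its period-$j$ cycles correspond exactly to $g$-cycles of over-rotation pair $(jk,jl)$ --- this requires identifying a block structure in the pattern and is essentially the content of the ``division'' lemma in \cite{BM1}. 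Your acknowledgment that the primitivity and bookkeeping in case~(1) are the hard part is accurate; that is where most of the work in the original paper lies, and your sketch does not yet contain the mechanism (the specific loop surgery) that guarantees one can hit a prescribed multiple of the coprime class rather than merely some cycle with the right rotation number.
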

	
	Definition \ref{d:orint} is based upon Theorem \ref{overrotation:pairs}; to rule out trivial cases we will
	from now on assume that we consider only maps with non-fixed periodic points
	(otherwise orbits of all points converge to a fixed point).
	
	\begin{definition}\label{d:orint}
		Given an interval map $f$, denote by $I_f$ the closure of the union of over-rotation
		numbers of $f$-periodic points, and call $I_f$ the \emph{over-rotation interval} of $f$.
	\end{definition}
	
	By Theorem \ref{t:shar}, any map $f$ with non-fixed periodic
	points has a cycle of period 2 with over-rotation number $\frac{1}{2}$;
	by Theorem \ref{overrotation:pairs} if $\frac{p}{q}=\rho(P)$ for a cycle $P$ then
	$[\frac{p}{q}, \frac12]\subset I_f$; hence for any interval map $f$ there exists
	a number $r_f, 0\le r_f<\frac12,$ such that $I_f=[r_f, \frac12]$.
	
	A continuous map $f:[0,1] \to [0,1]$ is said to have a \emph{horseshoe}
	if there exist subintervals $I$ and $J$ of $[0, 1]$ disjoint except
	perhaps a common endpoint, such that $f(I)\cap f(J)\supset I\cup J$. By
	\cite{BM1}, if $f$ has a horseshoe then $I_f = [0, \frac{1}{2}]$.
	
	A map is \emph{piecewise-monotone} if the interval $I$ can be
	partitioned into finitely many intervals (\emph{laps}) on which $f$ is
	\emph{strictly monotone}. If the smallest number of such intervals is $2$, then
	$f$ is said to be \emph{unimodal}; in this case $f$ has one turning point.
	If the smallest number of such intervals is $3$, then
	$f$ is said to be \emph{bimodal}; in this case $f$ has two turning point.
	In this paper we \emph{always} consider \emph{only} piecewise-monotone maps
	that are increasing on their leftmost laps.

	Every cycle $P$ of a map $f$ induces a cyclic permutation $\Pi$
	obtained by looking at how the map $f$ acts on the points of $P$
	ordered from the left to the right. One can introduce a relation $\sim$
	on the family of all cycles such that for two cycles $P$ and $Q$,
	$P\sim Q$ iff $P$ and $Q$ induce the same permutation; $\sim$ is an
	equivalence relation whose equivalence classes are called
	\emph{patterns}. If an interval map $f$ has a cycle $P$ from a
	\emph{pattern} $\pi$ associated with permutation $\Pi$, say that $P$ is
	a \emph{representative} of $\pi$ (in $f$) and $f$ \emph{exhibits} $\pi$
	(on $P$). A permutation $\pi$ forces a permutation $\theta$ if any
	continuous interval map $f$ which exhibits $\pi$ also exhibits
	$\theta$. By \cite{Ba}, \emph{forcing} is a partial ordering.
	
	A useful algorithm allows one to describe all patterns forced by a
	pattern $\pi$. Consider a cycle $P$ of pattern $\pi$, and denote the
	leftmost point of $P$ by $a$ and the rightmost point of $P$ by $b$.
	Every component of $[a,b]-P$ is called a \emph{P-basic interval}.
	Extend this map from $P$ to $[a,b]$ by defining it linearly on each
	$P$-basic interval and call the resultant map $f_P$ the
	$P$-\emph{linear map}. Then, the patterns of all cycles of $f_P$ are
	exactly the patterns forced by the pattern of $P$ (see \cite{alm00} and
	\cite{Ba}).
	
	Over-rotation pairs and numbers for patterns are defined just like for
	an interval map. Denote the \emph{over-rotation pair} and
	\emph{over-rotation number} of a pattern $\pi$ by $\orp(\pi)$ and
	$\rho(\pi)$, resp. If $P$ is a periodic orbit of pattern $\pi$, call
	the over-rotation interval $I_{\pi} = [r_{\pi}, \frac{1}{2}]$ of the
	$P$-linear map $f_P$ the \emph{over-rotation interval \textbf{of}
		$\pi$}.
	
	\begin{definition}
		A \emph{pattern} $\pi$ is called \emph{over-twist} if it does not
		\emph{force} any other \emph{pattern} of the same \emph{over-rotation
			number}.
	\end{definition}
	
	By Theorem \ref{overrotation:pairs}, an over-twist pattern has a \emph{coprime}
	\emph{over-rotation pair} (i.e., an over-rotation pair $(p,q)$ where $p$ and
	$q$ are coprime); in particular, over-twists of over-rotation
	number $\frac{1}{2}$ are of period $2$, so from now on we consider
	\emph{over-twists} of \emph{over-rotation number} distinct from
	$\frac{1}{2}$. By \cite{BM1} and by properties of forcing relation, for any
	$\frac{p}{q}\in (r_f, \frac{1}{2})$, an over-twist pattern
	of over-rotation number $\frac{p}{q}$ is exhibited by a cycle of $f$; over-twists are patterns
	that are \emph{guaranteed} to a map $f$ if the interior of $I_f$ contains the appropriate
	\emph{over-rotation number}. This can be sharpened if $f$ is
	piecewise-monotone.
	
	\begin{theorem}[\cite{B1}, \cite{blo95a}, \cite{B2}]\label{t:sharp}
		If $f:[0,1] \to [0,1]$ is a piecewise monotone continuous map with
		over-rotation interval $[r_f, \frac{1}{2}]$, then for any
		$\frac{p}{q}\in [r_f, \frac{1}{2}]$, there exists a periodic orbit $P$
		which exhibits over-twist pattern of over-rotation number
		$\frac{p}{q}$.
	\end{theorem}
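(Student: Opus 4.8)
The plan is to peel off the parts of the statement that are already available, reduce the remainder to a bare existence assertion about periodic orbits, and then cash in the finiteness that is built into piecewise-monotonicity. \emph{Reduction to the left endpoint.} If $\frac pq=\frac12$, a period-two orbit (which exists by Theorem~\ref{t:shar} since $f$ has non-fixed periodic points) has over-rotation number $\frac12$ and exhibits an over-twist pattern; if $\frac pq\in(r_f,\frac12)$, the discussion preceding the theorem already furnishes an over-twist pattern of over-rotation number $\frac pq$ exhibited by $f$, via \cite{BM1} and the forcing relation. If $r_f$ is irrational there is nothing further to prove, so write $r_f=\frac{p_0}{q_0}$ in lowest terms; it remains to exhibit an over-twist pattern of over-rotation number $\frac{p_0}{q_0}$ on a cycle of $f$.

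\emph{Removing the over-twist requirement.} It suffices to produce \emph{some} cycle $P$ of $f$ with $\rho(P)=\frac{p_0}{q_0}$. Indeed, the over-rotation pair of such a $P$ is $(dp_0,dq_0)$ for some $d\ge1$, and applying Theorem~\ref{overrotation:pairs}(2) to the $P$-linear map $f_P$ (with coprime pair $(p_0,q_0)$ and $d\sha 1$) shows that the pattern of $P$ forces a pattern of over-rotation pair $(p_0,q_0)$. Among the finitely many patterns of over-rotation pair $(p_0,q_0)$ forced by the pattern of $P$, choose a forcing-minimal one $\pi^*$. If $\pi^*$ were not over-twist it would force a distinct pattern $\pi'$ of over-rotation number $\frac{p_0}{q_0}$; the same application of Theorem~\ref{overrotation:pairs}(2) to $f_{P'}$ yields a pattern of over-rotation pair $(p_0,q_0)$ forced by $\pi'$, hence by the pattern of $P$, which by minimality of $\pi^*$ must equal $\pi^*$. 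Then $\pi^*$ forces $\pi'$ which forces $\pi^*$, so antisymmetry of forcing \cite{Ba} gives $\pi'=\pi^*$, a contradiction; thus $\pi^*$ is over-twist and $f$ exhibits it.

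\emph{Attaining $r_f$ through piecewise-monotonicity.} When $f$ has a single fixed point $a$, the over-rotation number of a cycle equals one half of the number of $L\!\leftrightarrow\!R$ transitions in the cyclic word recording on which side of $a$ the successive orbit points lie, divided by the period (the several-fixed-point case reduces to this by collapsing the central region). Both the transition count and the period are additive under concatenation of loops, so the over-rotation number of a loop in a finite graphical model of $f$ is a mediant combination of the over-rotation numbers of the primitive loops into which it splits, hence is at least the smallest of those; and a finite graph has only finitely many primitive loops, so this smallest value is attained. To make this rigorous one needs a finite model in which periodic orbits of $f$ correspond to loops carrying the correct $L/R$-data: either the Markov graph obtained after refining the partition by the turning points and the fixed point(s), or the finitely many subshifts of finite type supplied by the spectral decomposition of a piecewise-monotone map — on each basic set the over-rotation numbers fill a closed subinterval whose endpoints are realized by periodic orbits, while the over-rotation numbers attached to solenoidal sets are limits of over-rotation numbers of genuine periodic orbits and so cannot fall below the minimum over the basic sets. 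The value obtained is then a periodic over-rotation number that is $\le\rho(P)$ for every cycle $P$, hence equals $r_f$; feeding such a realizing cycle into the previous step completes the proof.

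\emph{The main obstacle} is precisely this last step. A piecewise-monotone map need not be Markov with respect to its turning-point partition, so the passage to a finite loop model is not free: one must either validate the loop-cutting calculus for the weaker relation ``$f([x,y])\supseteq[u,v]$'' (checking that a loop of $f$-covering intervals produces a periodic orbit whose $L/R$-itinerary, and hence over-rotation number, tracks the loop) or control the solenoidal part of the spectral decomposition carefully enough to be sure that $r_f$ is genuinely attained and not merely an infimum approached along some period-multiplying cascade. Once it is known that $r_f$ is realized by a periodic orbit, the first two steps are routine.
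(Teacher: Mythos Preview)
The paper does not prove Theorem~\ref{t:sharp}; it merely cites \cite{B1,blo95a,B2}. So there is no ``paper's own proof'' to compare against, and your proposal should be read as an attempt to reconstruct the argument from those references.

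Your first two reductions are correct and cleanly executed. The interior case and $\frac12$ are immediate, and the argument that a cycle with $\rho(P)=\frac{p_0}{q_0}$ forces an over-twist pattern of that over-rotation number is sound: the descent via Theorem~\ref{overrotation:pairs}(2) to over-rotation pair $(p_0,q_0)$, the choice of a forcing-minimal such pattern, and the antisymmetry argument all go through because there are only finitely many patterns of a fixed period.

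The genuine content, as you yourself flag, sits entirely in the third step: showing that $r_f$ is \emph{attained} by a periodic orbit when $f$ is piecewise-monotone. Your loop-cutting/mediant heuristic is the right intuition, but the first route you sketch (a Markov graph on the turning-point partition) does not work as stated, since a piecewise-monotone map is generically not Markov and there is no finite graph to cut loops in. The second route --- the spectral decomposition of \cite{blo95a} --- is exactly what the cited references use: on each basic set the dynamics is conjugate to a mixing subshift of finite type, on which the closedness of the over-rotation spectrum and realization of its endpoints by periodic orbits follow from standard specification/SFT arguments; the solenoidal pieces contribute only over-rotation numbers of the form $\frac{1}{2^k}$ (coming from period-doubling cascades), which lie in $(r_f,\frac12]$ whenever $r_f<\frac12$ and hence cannot be responsible for the left endpoint. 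Your caveat about controlling the solenoidal part is therefore the crux, and it is precisely this control that \cite{blo95a} supplies. As written, your step~3 is an outline that correctly identifies the machinery but does not carry it out; to turn it into a proof you would need to invoke (or reproduce) the relevant statements from \cite{blo95a,B1,B2} rather than leave them as alternatives to be ``validated''.
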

	
	The over-rotation interval of an over-twist pattern $\pi$ is
	$[\rho(\pi), \frac{1}{2}]$ so that $r_{\pi}= \rho(\pi)$ holds. In
	\cite{BB1} the following version of the opposite statement is proven:
	a pattern $\pi$ with \emph{coprime
		over-rotation pair} is \emph{over-twist} if and only if $r_{\pi} =
	\rho(\pi)$. 
	
	In \cite{BS}, it was proven that for a given rational number
	$\frac{p}{q}$  there exists a unique unimodal over-twist pattern
	$\gamma_{\frac{p}{q}}$ of over-rotation number $\frac{p}{q}$; moreover, the
	dynamics of $\gamma_{\frac{p}{q}}$ was described. It was also shown that the over-rotation
	interval is a monotone function of a map considered on a wide variety of
	one-parameter families of unimodal maps. The goal of the present paper is
	to extend this result onto the family of truncations of bimodal interval maps. It is
	known (see, e.g., \cite{MT})
	that any bimodal interval map can be modeled by a truncation of a bimodal
	horseshoe map. So, we parameterize the family  of all truncations of a bimodal
	horseshoe map and then show that the set of the parameters which correspond to
	a particular to a fixed over-rotation interval is a connected subset of the parameter
	space. For this we will use the explicit description of all \emph{bimodal} over-twist
	patterns obtained in a recent paper \cite{BB2}. We discuss our plans in details below.
	
	A \emph{bimodal horseshoe
		map}  is a \emph{bimodal} map $H_2: [0,1] \to [0,1]$ defined by the following formula
	
	\begin{equation}
		\nonumber
		H_2(x)=
		\begin{cases}
			
			3x  &   \text{if $0 \leq x \leq \frac{1}{3} $}\\
			2-3x  & \text{if $\frac{1}{3} \leq x \leq \frac{2}{3}$}\\
			3x-2 & \text{if $\frac{2}{3} \leq x \leq 1 $ }\\
			
		\end{cases}
	\end{equation}
	
	Evidently, $H_2$ has cycles that exhibit all bimodal patterns.

	For $\alpha, \beta \in [0,1]$ with $ \alpha \geqslant \beta$, we call
	the interval $\xi^{max}_{\alpha} = [\frac{\alpha}{3}, \frac{2}{3} -
	\frac{\alpha}{3}]$ where $ H_2(x) \geqslant \alpha$ the  \emph{level}
	$\alpha$ \emph{max flat spot} and the interval $\xi^{min}_{\beta}
	=[\frac{2}{3} - \frac{\beta}{3}, \frac{2}{3} + \frac{\beta}{3}]$ where
	$H_2(x) \leqslant \beta$ the \emph{level} $\beta$ \emph{min flat spot}.
	The map $H_{\alpha,\beta}: [0,1] \to [0,1]$ defined by:
	
	\begin{equation}
		\nonumber
		H_{\alpha,\beta}(x)=
		\begin{cases}
			
			H(x) & \text{if $ x \notin \xi^{max}_{\alpha} \cup \xi^{min}_{\beta} $}\\
			
			\alpha  & \text{if $ x \in \xi^{max}_{\alpha} $}\\
			\beta  & \text{if $ x \in  \xi^{min}_{\beta} $ }\\
			
		\end{cases}
	\end{equation}
	
	\noindent is called the \emph{bimodal truncation} of $H$ with parameters $\alpha$ and $\beta$.
	Figure \ref{bimodal:truncation} shows the graphs of $H_2$ and $H_{\alpha,\beta}$.
	
	\begin{figure}[H]
		\caption{The graphs of the maps $H_2$ and $H_{\alpha,\beta}$ with the
			latter shown in bolder lines}
		\centering
		\includegraphics[width=1.3\textwidth]{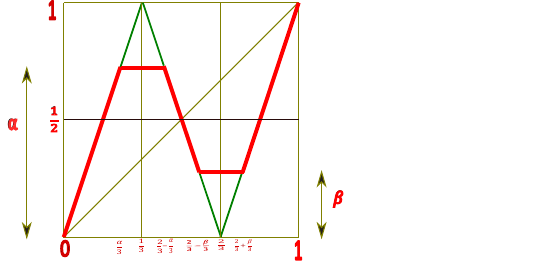}
		\label{bimodal:truncation}
	\end{figure}
	
	\noindent Recall that by \cite{MT} any bimodal map can be modeled by some
	$H_{\alpha,\beta}$.

	Let $\mathcal{H} = \{ H_{\alpha,\beta}: \alpha, \beta \in [0,1] $ $ \& $ $
	\alpha \geqslant \beta \}$
	be the family of all truncations of $H_2$. Denote the \emph{over-rotation interval}
	of $H_{\alpha,\beta}$ by $I_{\alpha,\beta}$. It is easy to see that if
	$\alpha = \beta$, $\alpha \leqslant \frac{1}{2}$, or $\beta \geqslant
	\frac{1}{2}$, then all periodic points of $H_{\alpha, \beta}$ are fixed.
	Hence, the $\omega$-limit set $\omega(x)$ is a fixed
	point for every $x$, and we are not considering such maps. Excluding these
	parameters, we define the \emph{parameter space} as $\mathcal{P}
	= \{ (\alpha, \beta) \in [0,1] \times [0,1] : \alpha \geqslant
	\frac{1}{2} \geqslant \beta \}$; $\mathcal{P}$ is a \emph{rectangle}
	with vertices $(1,0), (1, \frac{1}{2}), (\frac{1}{2}, \frac{1}{2})$ and
	$(\frac{1}{2}, 0)$. We will study $\mathcal{P}$ using the Euclidian metric.

	Let us use the following terminology to describe $\mathcal{P}$
	throughout the paper. The \emph{points} $(1,0)$ and $(\frac{1}{2},
	\frac{1}{2})$ in the \emph{parameter plane} $\mathcal{P}$ are called
	the \emph{focal point} $\mathcal{F}$ and the \emph{vertex}
	$\mathcal{V}$ respectively. Let us call the set $\mathcal{B} = \{(x, \frac12)
	|\frac{1}{2}\le x\le 1\}\cup \{(\frac12, y)|0\le y\le \frac{1}{2}\}$ the \emph{base set}.
	Similarly, call the set $\mathcal{S} = \{(x, 0)|\frac{1}{2}\le x\le 1\}\cup \{(1, y)|0
	\le y \le \frac{1}{2}\}$ the \emph{side arm}. 
	
	\begin{figure}[H]
		\caption{ The Parameter space $\mathcal{P}$ }
		\centering
		\includegraphics[width=0.7\textwidth]{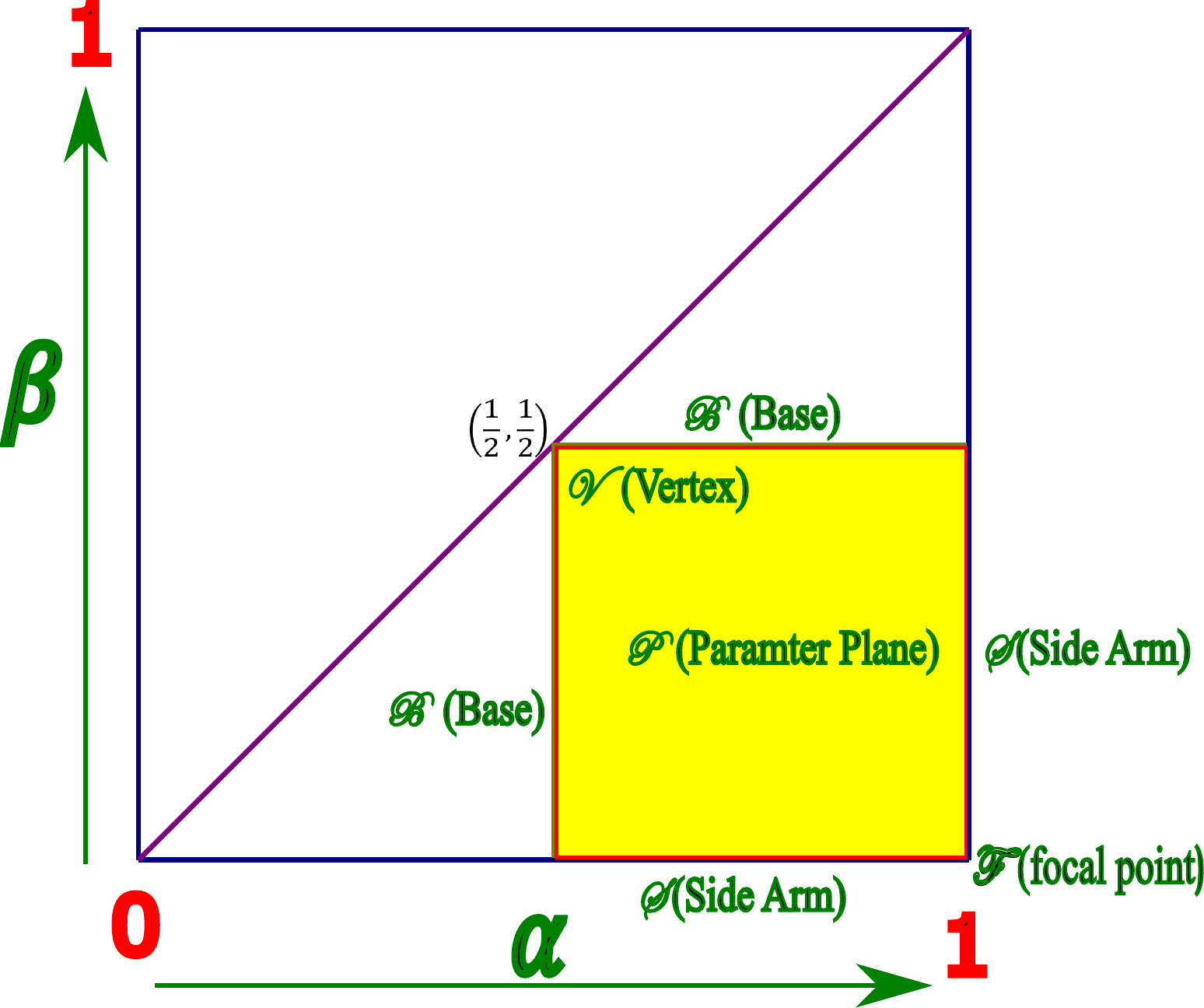}
		\label{parameter:plane:figure}
	\end{figure}
	
	Define a map $\psi : \mathcal{P} \to [0, \frac{1}{2}]$ with
	$\psi(\alpha, \beta) = \rho(I_{\alpha, \beta})$. Observe that at
	the \emph{focal point} the value of $\psi$ is the \emph{least} namely
	$0$, while at any point in the \emph{base} $\mathcal{B}$, the value of
	$\psi$ is the \emph{greatest} namely, $\frac{1}{2}$. We will call the
	set  $\psi^{-1}(\nu) = \psi_{\nu} = \{ (\alpha,\beta) \in \mathcal{P} :
	\psi( (\alpha, \beta) = \nu \}$ the \emph{Bimodal
		Iso-over-rotation-tract} corresponding to $\nu$ or the
	$\nu$-\emph{Bimodal Iso-over-rotation-tract}. The main objective of our
	paper is to show that for any $\nu \in [0, \frac{1}{2}]$ the
	$\nu$-\emph{Bimodal Iso-over-rotation-tract} is a \emph{connected set}.
	In other words the map $\psi : \mathcal{P} \to [0, \frac{1}{2}]$ is a
	\emph{monotone map}.
	
	We divide our paper into three sections. Section 1
	is our Introduction. Section 2 contains some
	preliminary ideas. Section 3 is the
	main section of the paper where we prove that \emph{Bimodal
		Iso-over-rotation-tracts} are connected.
	
	\section{PRELIMINARIES }\label{s:prelim}
	
	\subsection{Kneading Sequences}
	
	Let $f:[0,1] \to [0,1]$ be a bimodal map with
	turning points $0<c_1<c_2<1$. Call $\mathcal{S}= \{I_0, C_1, I_1, C_2, I_2\}$
	where $I_0=[0,c_1)$, $C_1=\{ c_1\}$, $I_1=(c_1, c_2)$, $C_2 =
	\{c_2\}$ and $I_2=(c_2, 1]$ the \emph{symbolic set} of $f$.
	Order its elements as $I_0<C_1<I_1<C_2<I_2$. Define a \emph{location function} $\mathcal{L} :
	[0,1] \to \mathcal{S}$ which assigns to each point $x \in [0,1]$, its
	unique \emph{location} $\mathcal{J}(x) \in \mathcal{S}$ defined by
	$x\in \mathcal{J}(x)$. Observe that if $x < y$, then
	$\mathcal{J}(x)< \mathcal{J}(y)$. Let $\mathcal{S}^{\mathbb{N}}$ be
	the set of all infinite sequences of symbols
	$(\mathcal{J}_0,\mathcal{J}_1, \mathcal{J}_2, \mathcal{J}_3, \dots )$
	where $\mathcal{J}_i \in \mathcal{S}$ for every $i$.
	
	Define a function $\mathcal{I}:[0,1]\to \mathcal{S}^{\mathbb{N}}$
	which associates with each point $x\in [0,1]$, the unique sequence
	$\mathcal{I}(x)=(\mathcal{J}(x), \mathcal{J}(f(x)),
	\mathcal{J}(f^2(x)),$ $\dots)\in \mathcal{S}^{\mathbb{N}}$. We call
	$\mathcal{J}(x)$, the \emph{itinerary} of the point $x$. Let $\sigma :
	\mathcal{S}^{\mathbb{N}}\to \mathcal{S}^{\mathbb{N}}$ be the
	\emph{shift map} defined by  $\sigma(\mathcal{J}_0,\mathcal{J}_1,
	\mathcal{J}_2, \mathcal{J}_3, \dots )= (\mathcal{J}_1, \mathcal{J}_2,
	\mathcal{J}_3, \mathcal{J}_4, \dots)$. It follows that $\mathcal{I}:
	[0,1]\to \mathcal{S}^{\mathbb{N}}$ conjugates the map $f:[0,1]\to
	[0,1]$ to the \emph{shift map} $\sigma:\mathcal{S}^{\mathbb{N}}\to
	\mathcal{S}^{\mathbb{N}}$. This allows one to model the dynamics of the
	map $f$ using the properties of the \emph{itineraries}.
	
	Define a \emph{sign} function $\Theta:\mathcal{S}\to$ $
	\{-1,0,1\}$ as follows: for any $j$, $\Theta(I_j)=+1$ if $f$ is
	\emph{increasing} on $I_j$, $\Theta( I_j)=-1 $ if $f$ is
	\emph{decreasing} on $I_j$, and $\Theta(C_j)=0$ for any $j$.
	Define a partial order $\succ$ on all
	\emph{itineraries} as follows. Let $\mathcal{J} =
	(\mathcal{J}_0,\mathcal{J}_1, \mathcal{J}_2, \mathcal{J}_3, \dots )$
	and $ \mathcal{K}= (\mathcal{K}_0,\mathcal{K}_1, \mathcal{K}_2,
	\mathcal{K}_3, \dots )$ be two \emph{itineraries}. Let $\kappa =
	\min \{i\in \mathbb{N}: \mathcal{J}_i\ne \mathcal{K}_i\}$, and set $
	\theta(\kappa-1)= \Pi_{i=1}^{\kappa-1} \Theta(\mathcal{J}_i)
	=\Pi_{i=1}^{\kappa-1}\Theta(\mathcal{K}_i)$. Then say $
	\mathcal{J}$ is \emph{stronger} than $\mathcal{K}$ and write $
	\mathcal{J}\succ \mathcal{K}$ iff $ \mathcal{J}_{\kappa} >
	\mathcal{K}_{\kappa}$ (if $\theta(\kappa-1)=+1$) \emph{or} $
	\mathcal{K}_{\kappa}>\mathcal{J}_{\kappa}$ (if
	$\theta(\kappa-1)= -1$). If $\theta(\kappa-1)=0$ then $\succ$ is \emph{not
		defined}.
	Theorem \ref{compare:itinerary} follows from \cite{MT}.
	
	\begin{theorem}\label{compare:itinerary}
		Let $x,y \in [0,1]$ with $x>y$. Then $\mathcal{I}(x) \succeq
		\mathcal{I}(y)$. Conversely, if for $x,y \in [0,1]$, we have
		$\mathcal{I}(x) \succ \mathcal{I}(y)$, then $x>y$.
	\end{theorem}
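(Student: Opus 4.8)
The statement is the classical order-versus-itinerary comparison from Milnor--Thurston kneading theory, and the plan is to deduce it directly from three things already available: the strict monotonicity of $f$ on each lap $I_0,I_1,I_2$, the (weak) order-preservation of the location map $\mathcal J$, and the bare definition of $\succ$. For the first assertion I would assume $x>y$; if $\mathcal I(x)=\mathcal I(y)$ there is nothing to do since $\succeq$ permits equality, so suppose the itineraries differ and set $\kappa=\min\{i:\mathcal J(f^i(x))\ne\mathcal J(f^i(y))\}$, writing $\mathcal I(x)=(\mathcal J_0,\mathcal J_1,\dots)$ and $\mathcal I(y)=(\mathcal K_0,\mathcal K_1,\dots)$. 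The heart of the proof is an inductive claim: for every $i\le\kappa$ the restriction of $f^i$ to $[y,x]$ is monotone, and for every $i<\kappa$ the common symbol $\mathcal J_i=\mathcal K_i$ is one of the laps $I_0,I_1,I_2$ (never a turning point) with $f^i([y,x])$ contained in it. The base case $i=0$ holds because $f^0=\mathrm{id}$ is monotone, because $x$ and $y$ share an element of $\mathcal S$ that cannot be any $C_j$ (otherwise $f^0(x)=c_j=f^0(y)$ would make $f^k(x)=f^k(y)$ for all $k$, contradicting finiteness of $\kappa$), and because $[y,x]$ lies in that element by convexity of the laps. For the step: if $f^i([y,x])$ sits in a single lap, then $f$ is strictly monotone there, so $f^{i+1}=f\circ f^i$ is monotone on $[y,x]$; and if moreover $i+1<\kappa$, then $f^{i+1}(x)$ and $f^{i+1}(y)$ lie in a common lap by the same turning-point argument, and the interval $f^{i+1}([y,x])$ --- which has those two points as endpoints --- lies in that lap by convexity.

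Applying the claim with $i=\kappa$, the map $f^\kappa$ is monotone on $[y,x]$ with sign $\prod_{i=0}^{\kappa-1}\Theta(\mathcal J_i)=\theta(\kappa-1)$, because each composition with $f$ across the lap $\mathcal J_i$ multiplies the sign by $\Theta(\mathcal J_i)\in\{-1,+1\}$. The images $f^\kappa(x)$ and $f^\kappa(y)$ lie in different elements of $\mathcal S$, hence are distinct, so if $\theta(\kappa-1)=+1$ then $f^\kappa$ is increasing and $f^\kappa(x)>f^\kappa(y)$, whence $\mathcal J_\kappa>\mathcal K_\kappa$ by the order-preservation of $\mathcal J$; if $\theta(\kappa-1)=-1$ the same reasoning gives $\mathcal K_\kappa>\mathcal J_\kappa$. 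In both cases the definition of $\succ$ yields $\mathcal I(x)\succ\mathcal I(y)$, hence $\mathcal I(x)\succeq\mathcal I(y)$. For the converse I would argue by contradiction: assume $\mathcal I(x)\succ\mathcal I(y)$, which forces the itineraries to differ, so $x\ne y$; if $x<y$, then the first assertion applied to $y>x$ gives $\mathcal I(y)\succeq\mathcal I(x)$, and since the itineraries differ, $\mathcal I(y)\succ\mathcal I(x)$. But $\mathcal I(x)\succ\mathcal I(y)$ and $\mathcal I(y)\succ\mathcal I(x)$ are both decided at the same first index of disagreement $\kappa$ with the same $\theta(\kappa-1)\neq0$, and they require $\mathcal J_\kappa>\mathcal K_\kappa$ and $\mathcal K_\kappa>\mathcal J_\kappa$ respectively --- impossible. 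Therefore $x>y$.

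The one genuinely substantive step is the inductive claim: I need to know that the orbit of \emph{every} point of $[y,x]$, not merely of the two endpoints, stays inside a single lap at each moment before time $\kappa$, so that $f^\kappa$ is honestly monotone on all of $[y,x]$ and the accumulated sign $\theta(\kappa-1)$ is well defined. This is exactly where piecewise monotonicity of $f$, the convexity of $I_0,I_1,I_2$, and the fact that no turning point is visited before time $\kappa$ are used in combination; everything else is a routine unwinding of the definition of $\succ$ and of the monotonicity of $\mathcal J$.
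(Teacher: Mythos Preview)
Your argument is correct and is precisely the standard Milnor--Thurston proof. The paper does not actually give its own argument for this theorem: it simply states that the result ``follows from \cite{MT}'' and moves on. What you have written is an accurate reconstruction of that classical reasoning --- the induction showing that $f^i$ is strictly monotone on $[y,x]$ for all $i\le\kappa$ (because no turning point can be visited before the first disagreement without collapsing the two orbits), the identification of the sign of $f^\kappa$ with the accumulated product $\prod_{i<\kappa}\Theta(\mathcal J_i)$, and the antisymmetry argument for the converse.

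One cosmetic point: you write the sign product as $\prod_{i=0}^{\kappa-1}\Theta(\mathcal J_i)$, whereas the paper's displayed definition of $\theta(\kappa-1)$ starts the product at $i=1$. Your indexing is the one that actually computes the monotonicity sign of $f^\kappa$ on $[y,x]$ and is the standard convention; the paper's lower limit is almost certainly a typographical slip. Either way, this does not affect the validity of your proof.
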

	
	The \emph{itineraries} $\mathcal{K}_j = \mathcal{I}(f(c_j)) \in
	\mathcal{S}^{\mathbb{N}}$ of the  \emph{critical points} $c_j$ for
	$j=1,2$ are called the \emph{kneading sequences} of the map $f$. The
	\emph{vector} $\oa{\mathcal{K}(f)} = (\mathcal{K}_1,
	\mathcal{K}_2)$ is called the \emph{kneading vector} of the map $f$.
	
	In this paper we will use Theorem \ref{compare:itinerary} to pinpoint
	the distribution of the periodic orbits corresponding to different
	bimodal over-twist patterns on a bimodal horseshoe map $H_2$ by
	comparing the itineraries of the points of absolute maxima and minima
	of these orbits.
	
	\subsection{Some results on Degree one Circle Maps}
	
	Recall some results on circle maps that we need (see \cite{alm00}
	for detail). Consider the unit circle $\uc$ and the natural projection $\pi:
	\mathbb{R}\to \uc$. If $f:\uc\to \uc$ is continuous, then
	there is a continuous map $F:\mathbb{R}\to \mathbb{R}$ such that $F
	\circ \pi=\pi\circ f$. Such a map $F$ is called a \emph{lifting} of $f$. It
	is unique up to translation by an integer. An integer $d$ with $F(x+1)=F(x)+d$
	for all $x\in \mathbb{R}$ is called the \emph{degree} of the map $f$ and is
	independent of the choice of $F$.
	
	Let us denoted by $\mathcal{L}_1$ the set of all liftings of continuous
	degree one maps of $\uc$ onto itself. Let $F \in \mathcal{L}_1$. We
	define \emph{upper and lower rotation numbers} of $x \in \mathbb{R}$
	for $F \in \mathcal{L}_1$ as $\overline{\rho_F}(x) = \displaystyle
	\limsup_{n \to \infty} \frac{F^{n}(x) - x}{n}$ and
	$\underline{\rho_F}(x) = \displaystyle \liminf_{n \to \infty}
	\frac{F^{n}(x) - x}{n}$ respectively.  If the numbers
	$\overline{\rho_F}(x) $ and $\underline{\rho_F}(x)$ are equal the
	common value is called the \emph{rotation number of x} for $F$ and is
	denoted by $\rho_F(x)$.
	
	\begin{theorem}[\cite{alm00}]\label{t:lift1}
		If $F \in \mathcal{L}_1'$ is a lifting of a circle map $f$ then
		$\rho_F(x)$  exists for all $x \in \mathbb{R}$ and is independent of
		$x$. Moreover, it is rational if and only if $f$ has a periodic point.
	\end{theorem}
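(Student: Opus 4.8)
The plan is to run the classical Poincaré-type argument, the only real input being the monotonicity built into the class $\mathcal{L}_1'$. Set $\phi_n(x) = F^n(x) - x$. From $F(x+1) = F(x) + 1$ one gets $F^n(x+1) = F^n(x) + 1$ by induction, so each $\phi_n$ is continuous and $1$-periodic; let $M_n = \max_x \phi_n(x)$ and $m_n = \min_x \phi_n(x)$, both attained on $[0,1]$. Writing $F^{n+k} = F^n \circ F^k$ and using $\phi_n(y) \le M_n$ for every $y$ gives the subadditivity $M_{n+k} \le M_n + M_k$, and symmetrically $m_{n+k} \ge m_n + m_k$. The crucial estimate is $M_n - m_n \le 1$: picking points achieving $M_n$ and $m_n$ in $[0,1)$, hence at distance $<1$, the monotonicity of the degree-one map $F^n$ forces their $F^n$-images to differ by at most $1$, and a short computation yields the bound. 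By Fekete's lemma, $M_n/n \to \inf_k M_k/k$ and $m_n/n \to \sup_k m_k/k$ (both finite, since $n m_1 \le m_n \le M_n \le n M_1$), and since $\lvert M_n - m_n\rvert \le 1$ these two limits coincide, with common value $\rho$. For an arbitrary $x$ the squeeze $m_n \le \phi_n(x) \le M_n$ then gives $\phi_n(x)/n \to \rho$, so $\overline{\rho_F}(x) = \underline{\rho_F}(x) = \rho$ for every $x$; this is the asserted existence and independence of the base point.

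For the second assertion, first suppose $f$ has a periodic point of period $q$. Lift one of its points to $x_0 \in \mathbb{R}$; then $F^q(x_0) = x_0 + p$ for some $p \in \mathbb{Z}$, so $F^{nq}(x_0) = x_0 + np$ by induction, and $\rho_F = \rho_F(x_0) = \lim_n np/(nq) = p/q \in \mathbb{Q}$. Conversely, suppose $\rho_F = p/q$ in lowest terms with $q \ge 1$, and set $G := F^q - p$. Then $G \in \mathcal{L}_1'$ (composition and integer translation preserve monotonicity and degree one), and one checks $G^n(x) = F^{nq}(x) - np$, whence $\rho_G = q\rho_F - p = 0$. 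It is enough to find a fixed point of $G$, since then $F^q(x_0) = x_0 + p$ and $\pi(x_0)$ is $f$-periodic with period dividing $q$. If $G$ had no fixed point, the continuous $1$-periodic function $x \mapsto G(x) - x$ would be nowhere zero, hence of constant sign and, by compactness, bounded away from $0$: say $G(x) - x \ge \delta > 0$ for all $x$ (the case $\le -\delta$ being symmetric). Monotonicity of $G$ then gives $G^n(x) \ge x + n\delta$ for all $n$, so $\rho_G \ge \delta > 0$, a contradiction. Hence $G$ has a fixed point, completing the proof.

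The step I expect to carry the actual weight is the uniform bound $M_n - m_n \le 1$ and the consequent collapse of the upper and lower rotation numbers to a single base-point-independent value: this is precisely where one uses that $F$ lies in $\mathcal{L}_1'$ rather than in the larger class $\mathcal{L}_1$, since a general continuous degree-one circle map possesses only a rotation interval. The remaining ingredients — Fekete's subadditive lemma and one application of the intermediate value theorem — are routine.
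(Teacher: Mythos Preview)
Your argument is correct and is precisely the classical Poincar\'e-type proof one finds in the cited reference \cite{alm00}. Note, however, that the paper itself does not supply a proof of this theorem: it is stated with attribution and used as a black box, so there is no ``paper's own proof'' to compare against. Your write-up would serve perfectly well as a self-contained replacement for that citation.

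Two small remarks. First, the class $\mathcal{L}_1'$ is never actually defined in the paper; you have correctly inferred from context (and from \cite{alm00}) that it denotes the non-decreasing liftings in $\mathcal{L}_1$, and your proof uses exactly this monotonicity in the step $M_n - m_n \le 1$. Second, in the no-fixed-point argument for $G$ you do not in fact need the monotonicity of $G$ to iterate the inequality $G(x) - x \ge \delta$: the bound $G^n(x) \ge x + n\delta$ follows by straight induction from the pointwise estimate alone. This is harmless, but worth noting since you flagged monotonicity as the load-bearing hypothesis.
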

	
	We will call this number the rotation number of $F$ and is denoted by $\rho(F)$.
	
	\begin{lemma}
		The function $\rho: \mathcal{L}_1' \to \mathbb{R}$ is continuous.
	\end{lemma}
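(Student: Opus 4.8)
The plan is to exhibit $\rho$ as a \emph{uniform} limit of continuous functions on $\mathcal{L}_1'$, which we equip with the topology of uniform convergence; note that for $F,G\in\mathcal{L}_1'$ the difference $F-G$ is $1$-periodic, so $d(F,G)=\sup_{x\in\mathbb{R}}|F(x)-G(x)|=\sup_{x\in[0,1]}|F(x)-G(x)|$ is a finite metric inducing this topology. First I would recall the basic a priori estimate for this class (a standard fact, cf. \cite{alm00}: $F^n$ again lifts a degree one circle map, and $F^{kn}(x)-x$ is a sum of $k$ values of the $1$-periodic function $F^n-\mathrm{id}$, whose oscillation is at most $1$; dividing by $kn$ and letting $k\to\infty$ confines $\rho(F)$ to an interval of length $\le 1/n$): for every $F\in\mathcal{L}_1'$, every $x\in\mathbb{R}$, and every $n\ge 1$ one has $|F^n(x)-x-n\rho(F)|\le 1$. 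Equivalently, setting $g_n(F)=\tfrac1n\min_{x\in[0,1]}(F^n(x)-x)$ we obtain $|\rho(F)-g_n(F)|\le\tfrac1n$ for \emph{all} $F\in\mathcal{L}_1'$ simultaneously.

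Second, I would check that each $g_n$ is continuous on $\mathcal{L}_1'$. This reduces to two routine facts: (i) for fixed $n$ the iteration map $F\mapsto F^n$ is continuous in the uniform metric, which follows from the telescoping bound $d(F^n,G^n)\le\sum_{k=0}^{n-1}\sup_x|F(F^k(x))-F(G^k(x))|$ together with the observation that, on a bounded uniform neighborhood of any fixed $F_0$, all the maps that occur are equicontinuous (their restrictions to $[0,1]$ form an equicontinuous family because $F_0$ is uniformly continuous and the perturbations are uniformly bounded), so that small $d(F,G)$ forces small $d(F^n,G^n)$; and (ii) the functional $H\mapsto\min_{x\in[0,1]}(H(x)-x)$ is $1$-Lipschitz for the uniform norm. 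Composing (i) and (ii) shows $g_n$ is continuous.

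Finally, since $g_n\to\rho$ uniformly on $\mathcal{L}_1'$ with the explicit rate $1/n$ coming from the a priori estimate, $\rho$ is a uniform limit of continuous functions and is therefore continuous, which is the assertion. As an alternative route staying closer to \cite{alm00}, one could instead invoke the continuity of the endpoints $\rho_{\min}$ and $\rho_{\max}$ of the rotation interval on the full space of degree one lifts and restrict to the subclass $\mathcal{L}_1'$ on which $\rho_{\min}=\rho=\rho_{\max}$. The only step that requires a little care is the equicontinuity input used in (i) to make $F\mapsto F^n$ continuous; everything else is formal, so I expect that to be the main (mild) obstacle.
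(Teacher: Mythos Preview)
The paper does not prove this lemma; together with the surrounding material on degree-one circle maps it is simply quoted from \cite{alm00}, so there is no in-paper argument to compare against. Your overall strategy---exhibit $\rho$ as the uniform limit of continuous functionals $g_n$ via the a priori bound $|F^n(x)-x-n\rho(F)|\le 1$---is the standard one and is sound in outline.

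There is, however, a real gap in step (i). The equicontinuity claim is false: sup-norm closeness to a uniformly continuous $F_0$ does not transfer a modulus of continuity. With $F_0=\mathrm{id}$ and $0<\delta<\varepsilon$, the piecewise-linear non-decreasing degree-one lift $G_\delta$ that agrees with $\mathrm{id}$ outside $[0,2\varepsilon]$ and satisfies $G_\delta(0)=0$, $G_\delta(\delta)=\varepsilon$, $G_\delta(2\varepsilon)=2\varepsilon$ lies within $\varepsilon$ of $F_0$ but has slope $\varepsilon/\delta$ on $[0,\delta]$; hence $\{G_\delta\}_{\delta\in(0,\varepsilon)}$ is not equicontinuous. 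Your telescoping inequality is also misstated: the terms $|F(F^k(x))-F(G^k(x))|$ are controlled by $\omega_F(|F^k(x)-G^k(x)|)$, i.e.\ by the very quantity you are trying to bound, so the estimate is circular. The correct argument telescopes through the \emph{fixed} base map: with $a_k=F_0^{\,n-k}\circ F^{k}$ one has $a_0=F_0^{\,n}$, $a_n=F^{n}$ and
\[
|a_{k+1}(x)-a_k(x)|=\bigl|F_0^{\,n-k-1}\bigl(F(F^k(x))\bigr)-F_0^{\,n-k-1}\bigl(F_0(F^k(x))\bigr)\bigr|\le \omega_{F_0^{\,n-k-1}}\bigl(d(F,F_0)\bigr),
\]
and each $F_0^{\,j}$ is uniformly continuous, which gives continuity of $F\mapsto F^n$ at $F_0$ without any equicontinuity of neighborhoods. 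A still simpler fix is to replace your $g_n$ by $h_n(F)=\tfrac1n F^n(0)$: the same a priori bound yields $|\rho(F)-h_n(F)|\le 1/n$, and $F\mapsto F^n(0)$ is continuous at each $F_0$ by a finite induction using only continuity of $F_0$ at finitely many real numbers, so no function-space continuity of $F\mapsto F^n$ is needed. Finally, your ``alternative route'' via continuity of the rotation-interval endpoints is circular in this context: in \cite{alm00} those facts (recorded here as Theorem~\ref{degree:one:results}(5),(8)) are derived \emph{from} the present lemma together with the Lipschitz dependence of $F_l,F_u$ on $F$, not the other way around.
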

	
	For $F \in \mathcal{L}_1$ we define maps $F_l,F_u \in \mathcal{L}_1'$ as follows:
	$F_l(x) = \inf \{ F(y) : y \geqslant x \}$ and $F_u(x) = \sup \{ F(y) : y \leqslant x \} $.
	
	\begin{theorem}[\cite{alm00}]\label{degree:one:results}
		
		Then the following statements are true:
		
		\begin{enumerate}
			\item Let $F \in \mathcal{L}_1$. Then, $F_l(x) \leqslant F(x)
			\leqslant F_u(x)$ for every $x \in \mathbb{R}$.
			\item Let $F, G \in \mathcal{L}_1$ with $F \leqslant G$, then
			$F_l \leqslant G_l$ and $F_u \leqslant G_u$.
			\item Let $F \in \mathcal{L}_1'$. Then, $F_l = F_u = F$.
			\item The maps $F \mapsto F_l$ and $F \mapsto F_u$ are
			Lipschitz continuous with constant $1$ in the sup norm.
			\item The maps $F \mapsto \rho(F_l)$ and $F \mapsto \rho(F_l)$
			are continuous.
			\item If $F_l(x) \neq F(x)$, then $x \in Const(F_l)$ where
			$Const(F_l)$ denotes the union of all open intervals on which $F_l$
			is constant.
			\item Let $F \in \mathcal{L}_1$ be a lifting of a circle map
			$f$.  Then the set of all rotation numbers of points is equal to
			the interval $[\rho(F_l), \rho(F_u)]$. Moreover, for each rational
			$a$ from this interval there is a point $x$ such that $\pi(x)$ is
			periodic for $f$ and $\rho_F(x) = a$. The interval $[\rho(F_l),
			\rho(F{_u})]$ is called the \emph{rotation interval of} $F$. We
			will denote it by $Rot(F)$.
			\item The endpoints of $Rot(F)$ depend continuously on $F \in
			\mathcal{L}_1$.
		\end{enumerate}
		
	\end{theorem}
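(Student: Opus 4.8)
The plan is to establish the eight items in order, treating the first six as soft consequences of the definitions of the envelopes $F_l,F_u$ and reserving the real work for item (7). First I would record that $F_l,F_u\in\mathcal L_1'$: each is continuous (the relevant infimum or supremum is attained, since the lift $F$ is continuous with $F(y)\to\pm\infty$ as $y\to\pm\infty$), non-decreasing, and of degree one. Items (1)--(3) are then immediate: taking $y=x$ in $F_l(x)=\inf\{F(y):y\ge x\}$ and $F_u(x)=\sup\{F(y):y\le x\}$ gives (1); monotonicity of an infimum or supremum under enlarging or shrinking the index set gives (2); and if $F$ is already non-decreasing then $F(y)\ge F(x)$ for $y\ge x$ and $F(y)\le F(x)$ for $y\le x$, forcing $F_l=F_u=F$, which is (3). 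For (4) I would use the elementary estimate that perturbing a function uniformly by at most $\varepsilon$ changes an infimum or supremum over a fixed set by at most $\varepsilon$; applied uniformly in $x$ this yields $\|F_l-G_l\|_\infty\le\|F-G\|_\infty$ and the same bound for $F_u$, and (5) follows by composing these Lipschitz maps with the continuous map $\rho:\mathcal L_1'\to\mathbb R$ from the Lemma preceding the theorem. For (6), if $F_l(x)\ne F(x)$ then $F_l(x)<F(x)$ by (1); setting $\varepsilon=F(x)-F_l(x)>0$ and choosing, by continuity, $\delta>0$ with $F(z)>F_l(x)+\varepsilon/2$ for $|z-x|<\delta$, one observes that every point where $F$ descends below $F_l(x)+\varepsilon/2$ lies in $[x+\delta,\infty)$, so for any $z\in(x-\delta,x+\delta)$ the portion of $[z,\infty)$ inside the band $(x-\delta,x+\delta)$ is irrelevant to the infimum; hence $F_l\equiv F_l(x)$ on $(x-\delta,x+\delta)$ and $x\in Const(F_l)$.

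Item (7) carries the content. The inclusion of the set of point rotation numbers in $[\rho(F_l),\rho(F_u)]$ follows by iterating (1): since $F_l,F_u$ are non-decreasing, $F_l\le F\le F_u$ gives $F_l^{\,n}\le F^{\,n}\le F_u^{\,n}$ for all $n$, so $(F_l^n(x)-x)/n\le(F^n(x)-x)/n\le(F_u^n(x)-x)/n$, and passing to $\liminf$ and $\limsup$ and invoking Theorem \ref{t:lift1} (so $\rho_{F_l}(x),\rho_{F_u}(x)$ are constant, equal to $\rho(F_l),\rho(F_u)$) traps $\underline{\rho_F}(x)$ and $\overline{\rho_F}(x)$ in the interval. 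For the reverse inclusion and the realization of rationals by periodic points, I would reduce a rational $p/q\in[\rho(F_l),\rho(F_u)]$ to the value $0$ by passing to $\tilde F=F^{q}-p\in\mathcal L_1$, using the identities $(F^{n})_l=(F_l)^{n}$ and $(F^{n})_u=(F_u)^{n}$ --- valid because the continuous, properly unbounded $F$ carries $[x,\infty)$ onto $[F_l(x),\infty)$ and $(-\infty,x]$ onto $(-\infty,F_u(x)]$ --- to get $\rho(\tilde F_l)=q\rho(F_l)-p\le 0\le q\rho(F_u)-p=\rho(\tilde F_u)$. Then $\tilde F$ has a genuine fixed point: if $\tilde F(x)>x$ for all $x$, then $m=\min_x(\tilde F(x)-x)>0$, so $\tilde F_l(x)\ge x+m$ and $\rho(\tilde F_l)\ge m>0$, a contradiction; $\tilde F(x)<x$ for all $x$ is excluded symmetrically; hence $\tilde F(x_0)=x_0$, i.e.\ $F^{q}(x_0)=x_0+p$, so $\pi(x_0)$ is periodic for $f$ with $\rho_F(x_0)=p/q$. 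Irrational values in the interval are recovered by the standard rotation-interval argument (invariant measures and Birkhoff, or a direct splicing of orbits of $F_l$ and $F_u$ in which (6) is used). Finally (8) is immediate from (5) and (7), since by (7) the endpoints of $Rot(F)$ are $\rho(F_l)$ and $\rho(F_u)$, which vary continuously in $F$ by (5).

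The hard part will be the ``$\supseteq$'' direction of (7): realizing the whole interval $[\rho(F_l),\rho(F_u)]$ as rotation numbers. Even the rational case, streamlined as above, leans on the iteration identities $(F^{n})_l=(F_l)^{n}$ and its dual and (implicitly, to identify the endpoints correctly) on the convergence of the normalized extrema $\tfrac1n\min_x(F^n(x)-x)$, $\tfrac1n\max_x(F^n(x)-x)$ to $\rho(F_l),\rho(F_u)$ via sub/super-additivity; the irrational case requires the further device indicated above, where item (6) lets one splice orbit pieces across the intervals on which the envelopes are constant. Everything else reduces to routine manipulation of infima, suprema, and limits.
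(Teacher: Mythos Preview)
The paper does not prove this theorem at all: it is stated with the attribution \cite{alm00} and no proof is given, the result being quoted verbatim from Alsed\`a--Llibre--Misiurewicz as background for the later sections. So there is no ``paper's own proof'' to compare against; the authors simply rely on the reference.

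That said, your sketch is a faithful outline of the standard proofs one finds in \cite{alm00}. Items (1)--(6) and (8) are handled correctly and with the usual arguments. In item (7) your treatment of the rational case is fine, including the reduction to a fixed point of $F^{q}-p$ via the iteration identities $(F^{n})_l=(F_l)^{n}$, $(F^{n})_u=(F_u)^{n}$; the one place where your write-up is thin is the irrational case, which you wave at (``invariant measures and Birkhoff, or a direct splicing \dots''). In \cite{alm00} this is done by constructing, for each $a\in[\rho(F_l),\rho(F_u)]$, a non-decreasing degree-one map $F_a$ with $F_l\le F_a\le F_u$ and $\rho(F_a)=a$ (using continuity of $\rho$ on $\mathcal L_1'$ and an interpolating family between $F_l$ and $F_u$), and then exhibiting a point whose $F$-orbit coincides with its $F_a$-orbit by exploiting item (6) to thread the orbit through the flat pieces. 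If you intend to present a self-contained proof rather than cite the book, that step deserves to be written out; otherwise your proposal matches the classical argument.
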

	
	\subsection{Results on over-rotation numbers of bimodal maps \cite{BB2}}
	
	The article \cite{BB2} relates the \emph{over-rotation numbers} of bimodal interval maps and the classical rotation numbers of degree one circle maps of
	the real line. This yields a description of dynamics of
	\emph{bimodal  over-twist patterns}. Namely, given an over-rotation number $\frac{p}{q}$ there are $q-2p+1$
	\emph{bimodal over-twist patterns} $\Gamma_{r,\frac{p}{q}}$ with this over-rotation number;
	each of them can be characterized by a value of $r\in \{0, 1, 2,
	\dots, q-2p\}$. The patterns with $r=0$
	and $r=q-2p$ are \emph{unimodal} while the remaining $q-2p-1$ patterns
	are \emph{strictly bimodal}. Denote the permutation of the
	over-twist pattern $\Gamma_{r,\frac{p}{q}}$ by $\Pi_{r,
		\frac{p}{q}}$. By \cite{BB2} $\Pi_{r, \frac{p}{q}}$
	is as follows.
	
	\begin{equation}
		\Pi_{r,\frac{p}{q}}=
		\begin{cases}
			
			j+p  &   \text{if $1 \leq j \leq r $}\\
			q-j+r+1  & \text{if $r+1 \leq j \leq r+p$}\\
			2p-j+r+1 & \text{if $r+p+1 \leq j \leq r+2p$ }\\
			j-p & \text {if $r+2p+1 \leq j \leq q$}\\
		\end{cases}
	\end{equation}

	Let $P_{r,\frac{p}{q}}$ be a periodic orbit of a map $f$ which exhibits
	the pattern $\Gamma_{r, \frac{p}{q}}$. Number its $q$ points in \emph{spatial
		labelling} as $x_1<x_2<\dots<x_q$ and partition them into $4$
	disjoint parts.
	
	The first $r$ points $x_1,
	x_2, \dots, x_r$ are called \emph{red points} and are denoted by
	$\mathcal{R}_1^r, \mathcal{R}_2^r, \mathcal{R}_3^r, \dots,
	\mathcal{R}_r^r $. Under the action of $f$ the \emph{red points} are
	shifted to the right by $p$ points.
	
	The next $p$ points
	$x_{r+1}, x_{r+2},$ $ \dots, x_{r+p}$ are called \emph{green
		points} and are denoted by $\mathcal{G}_{1}^r, \mathcal{G}_{2}^r,
	\mathcal{G}_{3}^r, \dots, \mathcal{G}_{p}^r$. The \emph{green points}
	map onto the last $p$ points $x_{q-p+1}, \dots, x_{q-1}, x_{q}$ of the
	orbit with a \emph{flip}, that is orientation is 
	reversed but without any expansion.
	
	The next $p$ points
	$x_{r+p+1}, $ $ x_{r+p+2},$ $ \dots, x_{r+2p}$ are called
	\emph{pink points} and are denoted by $\mathcal{P}_1^r, \mathcal{P}_2^r,
	\mathcal{P}_3^r,$ $ \dots, \mathcal{P}_p^r $. The \emph{pink points}
	map onto the first $p$ points $x_{1}, x_{2}, $ $ \dots, x_{p}$
	of the orbit with a \emph{flip} but with no expansion.
	
	The last $q-2p-r$ points are called \emph{blue points} and
	are denoted by $\mathcal{B}_1^r,
	\mathcal{B}_2^r, \mathcal{B}_3^r, \dots, \mathcal{B}_{q-2p-r}^r $. They
	are shifted to the \emph{left} by $p$ points.
	
	If $r=0$, we have no \emph{red points}; if $r=
	q-2p$, we have no \emph{blue points}. For all other values of $r$, we
	have points of all colors.
	
	The dynamics of $P_{3, \frac{3}{11} }$ is depicted in the Figure \ref{f:bimodal-2}.
	
	\begin{figure}[H]
		\caption{ The Bimodal over-twist pattern $\Gamma_{3,\frac{3}{11}}$ }
		\centering
		\includegraphics[width=0.7\textwidth]{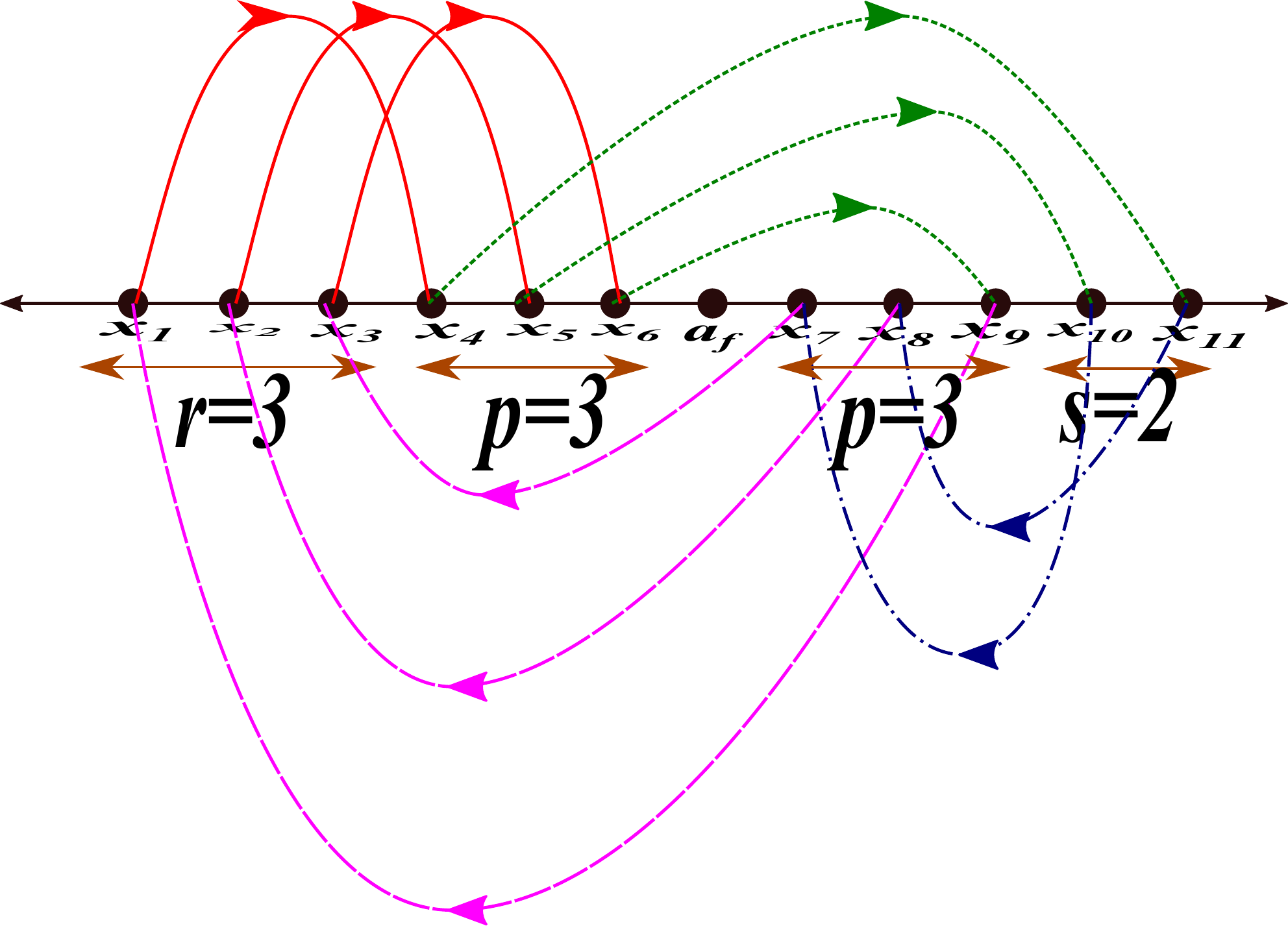}\label{f:bimodal-2}
	\end{figure}
	
	\subsection{Some results from Algebraic Topology}

	We assume knowledge of basic concepts such as connectedness and path-connectedness of topological spaces.
	
	If $f: X\to Y$ and $g: X\to Y$ are two continuous functions
	then $f$ and $g$  are said to be \emph{homotopic} if there exists a continuous
	function $H: (X, [0, 1])\to X$ such that $H(x, 0)=f(x)$, $H(x, 1)=g(x)$.
	Then we write $f\cong f'$ ; the map $H$
	is called a \emph{homotopy} between $f$ and $g$. Let $(X, \tau)$ be a
	topological space and $A\subset X$. A \emph{retraction} of $X$ onto $A$ is a
	continuous map $r: X\to A$ such that $r|_A = id_A$. If such a map
	$r$ exists then $A$ is called a \emph{retract} of $X$.
	
	\begin{definition}
		Let $(X, \tau)$ be a topological space and $A \subset X$. We say that$A$
		is a \emph{deformation retract} of $X$ if there exists a homotopy between the
		identity map on $X$ and the map that carries all of $X$ onto
		$A$ such that each point of $A$ remains fixed during the homotopy.
		
		In other words, there is a homotopy $H: X\times I \to X$ such
		that:
		\begin{enumerate}
			\item $H(x,0) = id_X$ $\forall x \in X$.
			\item $H(x,1) \in A$
			\item $H(a,t) =a $ $\forall t \in I$ and $\forall a \in A$.
		\end{enumerate}
		
	\end{definition}
	
	The map $r:X \to A$ defined by $r(x)=H(x,1)$ is a retraction of $X$
	onto $A$ and  $H$ is a homotopy between $id_X$ and the map $ j \circ r$
	where $ j: A \to X$ is the inclusion map defined by $j(a)=a$ for all $a
	\in A$.
	
	\begin{theorem}\label{deformation:connected}
		Let $(X,\tau)$ be a path-connected space and let $A \subset X$ be a
		deformation  retract of $X$. Then, $A$ is path-connected.
	\end{theorem}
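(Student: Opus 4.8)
The plan is to extract from the deformation retract hypothesis the single feature that actually drives the proof, namely the existence of a continuous retraction $r:X\to A$. As observed immediately before the statement, the map $r(x)=H(x,1)$ is continuous (it is the restriction of the continuous homotopy $H$ to the slice $X\times\{1\}$, composed with the homeomorphism $x\mapsto(x,1)$), its image lies in $A$ by property (2) of the definition, and property (3) gives $r(a)=H(a,1)=a$ for every $a\in A$, so $r|_A=\mathrm{id}_A$. In particular $r:X\to A$ is surjective. Thus the homotopy $H$ itself plays no further role; only the retraction $r$ is needed.

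Having fixed $r$, I would argue in either of two equivalent ways. First route: $A=r(X)$ is the image of the path-connected space $X$ under the continuous map $r$, and a continuous image of a path-connected space is path-connected, so $A$ is path-connected. Second (self-contained) route: pick arbitrary $a_0,a_1\in A$; by path-connectedness of $X$ choose a continuous $\gamma\colon[0,1]\to X$ with $\gamma(0)=a_0$, $\gamma(1)=a_1$, and set $\delta=r\circ\gamma\colon[0,1]\to A$. Then $\delta$ is continuous as a composition of continuous maps, and $\delta(0)=r(a_0)=a_0$, $\delta(1)=r(a_1)=a_1$ because $a_0,a_1\in A$ and $r|_A=\mathrm{id}_A$. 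Hence $\delta$ is a path in $A$ joining $a_0$ to $a_1$, and since $a_0,a_1$ were arbitrary, $A$ is path-connected.

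There is no real obstacle here; the content is a one-step diagram chase. The only minor point to handle with care is that $\delta$ must be regarded as a map into $A$ equipped with the subspace topology, which is legitimate since $r$ is by definition a map into $A$, and continuity of a map into a subspace is equivalent to continuity of the same map viewed into the ambient space with image contained in the subspace. (The analogous statement for ordinary connectedness would follow by the identical argument, replacing ``path'' by ``connected subset''.)
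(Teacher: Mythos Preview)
Your proof is correct and your second route is essentially identical to the paper's own argument: the paper also extracts the retraction $r(x)=H(x,1)$, picks two points $a,b\in A$, chooses a path $\xi$ in $X$ between them, and observes that $r\circ\xi$ is the required path in $A$. Your first route (continuous image of a path-connected space) is a trivially equivalent repackaging, and your remarks on the subspace topology are more careful than what the paper records.
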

	
	\begin{proof}
		Take $a, b \in A$. Since $(X, \tau)$ is path-connected, then there exists
		a continuous  map $\xi:[0,1]\to X$ with $\xi(0)=a$ and
		$\xi(1)=b$. Since $A$ is a deformation retract of $X$, there exists
		a continuous map $H:(X, [0,1])\to X$ such that $H(x,0)=id_X$ and
		$H(x,1)|_A =id_A$. Define $r:X\to A$ such that $r(x)=H(x, 1)$.
		Then $r|_A = id_A$.
		
		Define $\chi:[0,1]\to A$ by $\chi=r\circ \xi$. Then $\chi(0)=a$ and
		$\chi(1)=b$ and $\chi$ is continuous. Thus, $\chi$
		is a path in $A$ connecting $a$ and $b$. So, $A$ is a path-connected.
	\end{proof}
	
	\section{CONNECTEDNESS OF BIMODAL ISO-OVER-ROTATION-TRACTS}\label{bimodal:trunc}
	
	Let us outline our approach. Given a rational number $\frac{p}{q}$, we first
	describe the set $Z_{\frac{p}{q}}$ of all points $(\alpha,\beta)$
	in the parameter space $\mathcal{P}$ such that
	$\psi((\alpha,\beta)) = \frac{p}{q}$ and one of the flat spots $\xi^{max}_{\alpha}$
	and  $\xi^{min}_{\beta}$ exhibits an over-twist pattern of over-rotation
	number $\frac{p}{q}$; this set
	is a continuous broken line (a \emph{staircase}) in $\mathcal{P}$.
	Then we show that $Z_{\frac{p}{q}}$
	\emph{partitions} $\mathcal{P}$ into two
	\emph{path-connected components}: a \emph{component} $\mathcal{G}_{\frac{p}{q}}$
	where $\psi((\alpha,\beta))\geqslant \frac{p}{q}$ and a \emph{component}
	$\mathcal{U}_{\frac{p}{q}}$ where $\psi((\alpha,\beta))< \frac{p}{q}$.
	Then we show that the $\frac{p}{q}$-\emph{Bimodal Iso-over-rotation-tract}
	$\mathcal{I}_{\frac{p}{q}}$ is a \emph{deformation retract} of the component
	$\mathcal{G}_{\frac{p}{q}}$. Since $\mathcal{G}_{\frac{p}{q}}$ is path-connected,
	then, by Theorem \ref{deformation:connected}, $\mathcal{I}_{\frac{p}{q}}$ is path-connected.
	Finally we prove that $\psi : \mathcal{P}  \to [0, \frac{1}{2}]$ is continuous and
	then use it to prove connectedness of $\mu-\emph{Bimodal Iso-over-rotation-tract}$
	for an irrational number $\mu\in [0, \frac{1}{2}]$.
	

For two sets $A,B \subset \mathbb{R}^2$, $d(A,B)=\inf
\{d(x,y):x\in A $ $\& $ $y\in B\}$.
	
\begin{lemma}\label{distance:pairs}
If $(\alpha,\beta)$ and  $(\gamma,\delta)$ are points in  $\mathcal{P} $ co-linear with the focal point $\mathcal{F}$ such that
$d(\mathcal{F},(\alpha,\beta))$ $ \geqslant d(\mathcal{F},
(\gamma,\delta))$ then $\psi((\alpha,\beta))  \geqslant \psi((\gamma,\delta)) $.
\end{lemma}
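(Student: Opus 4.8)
The plan is to show that sliding a parameter point toward the focal point $\mathcal{F}=(1,0)$ can only enlarge the over-rotation interval. Relabel so that $(\alpha,\beta)$ is the point farther from $\mathcal{F}$. Because $\mathcal{F}$ is a vertex of the rectangle $\mathcal{P}$, the line through $\mathcal{F}$ meets $\mathcal{P}$ in a segment having $\mathcal{F}$ as an endpoint, so both our points lie on one ray issuing from $\mathcal{F}$ and the nearer point $(\gamma,\delta)$ lies on the segment $[\mathcal{F},(\alpha,\beta)]$. Parametrizing this ray (along which the first coordinate is non-increasing and the second non-decreasing) gives $\gamma\ge\alpha$ and $\delta\le\beta$. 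If $\alpha=\frac{1}{2}$ or $\beta=\frac{1}{2}$, then $(\alpha,\beta)$ lies on the base set $\mathcal{B}$, so $\psi(\alpha,\beta)=\frac{1}{2}$ is already the maximal value of $\psi$ and there is nothing to prove; hence we may assume $\alpha>\frac{1}{2}>\beta$, and then also $\gamma\ge\alpha>\frac{1}{2}>\beta\ge\delta$. Since $I_{\alpha,\beta}$ and $I_{\gamma,\delta}$ are intervals of the form $[r,\frac{1}{2}]$, it suffices to prove $I_{\alpha,\beta}\subseteq I_{\gamma,\delta}$, as then $\psi(\alpha,\beta)=r_{\alpha,\beta}\ge r_{\gamma,\delta}=\psi(\gamma,\delta)$.

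From $\gamma\ge\alpha$ and $\delta\le\beta$ one gets the nesting $\xi^{max}_{\gamma}\subseteq\xi^{max}_{\alpha}$ and $\xi^{min}_{\delta}\subseteq\xi^{min}_{\beta}$, and a short computation from the definition of a truncation shows $H_{\gamma,\delta}\ge H_{\alpha,\beta}$ on $[0,\frac{2}{3}-\frac{\beta}{3}]$ and $H_{\gamma,\delta}\le H_{\alpha,\beta}$ on $[\frac{2}{3}-\frac{\beta}{3},1]$, while $H_{\gamma,\delta}$ coincides with $H_2$ everywhere except on the interiors of $\xi^{max}_{\gamma}$ and $\xi^{min}_{\delta}$. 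Intuitively $H_{\gamma,\delta}$ is ``more spread out'' and so should carry at least as much rotational behaviour. To make this precise, fix a rational $\frac{p}{q}$ in the interior $(r_{\alpha,\beta},\frac{1}{2})$ of $I_{\alpha,\beta}$. By Theorem \ref{t:sharp} the map $H_{\alpha,\beta}$ exhibits an over-twist pattern of over-rotation number $\frac{p}{q}$; being exhibited by a bimodal map, it is one of the patterns $\Gamma_{r,\frac{p}{q}}$ of \cite{BB2}. Let $P_{r,\frac{p}{q}}$ be a suitable representative of $\Gamma_{r,\frac{p}{q}}$ inside the horseshoe $H_2$. The key point is that if $H_{\alpha,\beta}$ exhibits $\Gamma_{r,\frac{p}{q}}$ then $P_{r,\frac{p}{q}}$ can be chosen to avoid the interiors of $\xi^{max}_{\alpha}$ and $\xi^{min}_{\beta}$: one compares, via Theorem \ref{compare:itinerary}, the itineraries of the absolute maximum and minimum of $P_{r,\frac{p}{q}}$ with the itineraries of the points $\alpha$ and $\beta$ (the constant values of $H_{\alpha,\beta}$ on its flat spots), using the explicit permutation $\Pi_{r,\frac{p}{q}}$ and the hypothesis $\alpha>\frac{1}{2}>\beta$ (which guarantees $\alpha\notin\xi^{max}_{\alpha}$ and $\beta\notin\xi^{min}_{\beta}$, so no two orbit points can share a flat spot). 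Granting this, since $\xi^{max}_{\gamma}\subseteq\xi^{max}_{\alpha}$, $\xi^{min}_{\delta}\subseteq\xi^{min}_{\beta}$, and $H_{\gamma,\delta}=H_2$ off the interiors of $\xi^{max}_{\gamma}\cup\xi^{min}_{\delta}$, the set $P_{r,\frac{p}{q}}$ is a periodic orbit of $H_{\gamma,\delta}$ of pattern $\Gamma_{r,\frac{p}{q}}$, so $\frac{p}{q}\in I_{\gamma,\delta}$. As $I_{\gamma,\delta}$ is closed and contains every rational of $(r_{\alpha,\beta},\frac{1}{2})$, we get $I_{\alpha,\beta}=[r_{\alpha,\beta},\frac{1}{2}]\subseteq I_{\gamma,\delta}$, completing the argument.

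The one step that needs real work is the key claim of the previous paragraph: that $H_{\alpha,\beta}$ exhibiting $\Gamma_{r,\frac{p}{q}}$ through \emph{some} cycle forces a representative $P_{r,\frac{p}{q}}\subset H_2$ that misses the flat-spot interiors, or equivalently that ``$H_{\alpha,\beta}$ exhibits $\Gamma_{r,\frac{p}{q}}$'' is precisely the assertion that $\alpha$ dominates, and $\beta$ is dominated by, the extremal coordinates of $P_{r,\frac{p}{q}}$. This is exactly where the explicit permutation $\Pi_{r,\frac{p}{q}}$ and the red/green/pink/blue structure of $P_{r,\frac{p}{q}}$ from \cite{BB2} enter, in combination with Theorem \ref{compare:itinerary}. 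An alternative that sidesteps orbit-tracking is to pass to the degree-one circle lifts associated by \cite{BB2} to bimodal maps, for which $r_f$ is the left endpoint of a rotation interval: the pointwise comparison between $H_{\gamma,\delta}$ and $H_{\alpha,\beta}$ above translates into an inequality between the corresponding lifts, and then part (2) of Theorem \ref{degree:one:results} together with monotonicity of the rotation number yield $r_{\gamma,\delta}\le r_{\alpha,\beta}$ at once.
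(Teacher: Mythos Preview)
Your opening paragraph is exactly right and matches the paper: from co-linearity with $\mathcal{F}=(1,0)$ and the distance hypothesis you get $\gamma\ge\alpha$ and $\delta\le\beta$, hence the flat-spot nesting $\xi^{max}_\gamma\subseteq\xi^{max}_\alpha$ and $\xi^{min}_\delta\subseteq\xi^{min}_\beta$, and the task reduces to $I_{\alpha,\beta}\subseteq I_{\gamma,\delta}$. But at this point the paper simply stops: it asserts the inclusion directly from the nesting in one line. You instead embark on a long detour through Theorem~\ref{t:sharp}, the specific over-twist patterns $\Gamma_{r,p/q}$, the choice of a canonical representative $P_{r,p/q}$ inside $H_2$, and an itinerary comparison via Theorem~\ref{compare:itinerary} --- and then you yourself flag that the central step (that a representative avoiding the flat-spot interiors exists whenever $H_{\alpha,\beta}$ exhibits $\Gamma_{r,p/q}$) ``needs real work'' and is not actually carried out. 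So as written the proposal is both far more elaborate than needed and formally incomplete.

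The shortcut you are missing is that the flat-spot nesting already yields the pattern inclusion without any reference to over-twists or circle lifts. Outside $\xi^{max}_\alpha\cup\xi^{min}_\beta$ the two maps coincide with $H_2$; on $\xi^{max}_\alpha$ one has $H_{\gamma,\delta}\ge\alpha=H_{\alpha,\beta}$, and on $\xi^{min}_\beta$ one has $H_{\gamma,\delta}\le\beta=H_{\alpha,\beta}$. Hence for any cycle $P$ of $H_{\alpha,\beta}$ and any $P$-basic interval $J$, the image $H_{\gamma,\delta}(J)$ covers every $P$-basic interval that $H_{\alpha,\beta}(J)$ covers; the standard loop-of-intervals argument (as in \cite{alm00}) then produces a cycle of $H_{\gamma,\delta}$ with the same pattern and the same over-rotation number. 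That gives $I_{\alpha,\beta}\subseteq I_{\gamma,\delta}$ immediately. The machinery from \cite{BB2} that you invoke is saved in the paper for the later, genuinely harder step of describing the leading sets $Z_{p/q}$; it is not needed here.
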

	
\begin{proof}
Since, $(\alpha,\beta)$ and  $(\gamma,\delta)$ are points in  $\mathcal{P}$ co-linear with the focal point $\mathcal{F}$ with
$d(\mathcal{F}, (\alpha,\beta))$ $ \geqslant d(\mathcal{F},
(\gamma,\delta))$, it follows that $\gamma \geqslant \alpha$
and $\delta \leqslant \beta$. Thus $\xi_{\gamma}^{max} \subseteq
\xi_{\alpha}^{max}$, $\xi_{\delta}^{min} \subseteq
\xi_{\beta}^{min}$, and hence $I_{(\alpha,\beta)} \subseteq I_{(\gamma,\delta)}$
and $\psi((\alpha,\beta))\geqslant \psi((\gamma,\delta))$ as desired.
\end{proof}
	
\begin{definition}\label{leading:set:rational}
Let $\frac{p}{q}$ be a given rational number where $p$ and $q$ are
\emph{coprime}. The \emph{leading set} $Z_{\frac{p}{q}}$ is
the collection of points
$(\alpha,\beta) \in \mathcal{P}$ which satisfies the
following:
		
\begin{enumerate}
\item at least one of \emph{flatspots} $\xi_{\alpha}^{max}$ and
$\xi_{\beta}^{min}$ exhibits an \emph{over-twist pattern} with
\emph{over-rotation number} $\frac{p}{q}$.
\item $\psi((\alpha,\beta)) = \frac{p}{q}$.
\end{enumerate}	
\end{definition}
	
	We now describe the structure of the leading set $Z_{\frac{p}{q}}$  for
	a given rational number $\frac{p}{q}$ where $p$ and $q$ are coprime.
	For this we first study the distribution of the  bimodal over-twist
	periodic orbits in a bimodal horseshoe map $H_2$.
	
	For any rational $\frac{p}{q}$ with \emph{coprime} $p$ and $q$, there are $q-2p$ bimodal over-twist patterns $\Gamma_{r,\frac{p}{q}}$ for $r\in \{0, 1, 2, \dots, q-2p\}$. Out of these the patterns $\Gamma_{0,\frac{p}{q}}$ and $\Gamma_{q-2p,\frac{p}{q}}$ are unimodal and the remaining $q-2p-2$ patterns are strictly bimodal. Recall that in the context of the map $H_2$ we have
$I_0=[0, \frac13],$ $I_1=[\frac13, \frac23],$ $I_2=[\frac23, 1]$.
	
The map $H_2$ has a unique cycle $P_{0,\frac{p}{q}}$
such its point of \emph{absolute maximum} $M_0$ and its point of \emph{absolute minimum} $m_0$ lies in $I_1$.
Similarly, $H_2$ has a unique periodic orbit $P_{q-2p,\frac{p}{q}}$ such its point of \emph{absolute maximum} $M_{q-2p}$ and its point of \emph{absolute minimum} $m_{q-2p}$ lies in $I_1$.

Finally, for each $r\in \{1, 2, \dots, q-2p-1\}$, $H_2$ has a unique periodic orbit $P_{r,\frac{p}{q}}$
	such its point of \emph{absolute maximum} $M_r$ lies in $I_1$ and its point of \emph{absolute minimum} $m_r$ lies in $I_2$.

 Let $\mathcal{P}_{\frac{p}{q}}$ be the collection of these $q-2p+1$ cycles. Let us study the location of these cycles on $H_2$. For this we introduce a \emph{partial ordering}
	$\triangleleft$ in $\mathcal{P}_{\frac{p}{q}}$ as follows.  For $r_1,
	r_2\in \{0, 1, 2, \dots, q-2p\}$,  write $P_{r_2,\frac{p}{q}}$ $
	\triangleleft$ $P_{r_1,\frac{p}{q}}$ and say $P_{r_2, \frac{p}{q}}$
	\emph{floats above}  $P_{r_2,\frac{p}{q}}$ in $H_2$ iff $H_2(M_{r_2}) >
	H_2(M_{r_1}) $ and $H_2(m_{r_2}) > H_2(m_{r_1}) $ where $M_{r_1}$ and
	$M_{r_2}$ are the points of \emph{absolute maximum} and  $m_{r_1}$ and
	$m_{r_2}$ be the points of \emph{absolute minimum} of these orbits
	respectively.

 Given  a periodic orbit $P_{r, \frac{p}{q}}$  it is easy to see that 
	that all its \emph{red} points (if any) lies in the lapse $I_0$, all its \emph{green} and \emph{pink} points lies in the lapse $I_1$ and all its \emph{blue} points (if any ) lies in the lapse $I_2$.
	
	\begin{lemma}\label{floating:above:1}
		For every $ 1 \leqslant k \leqslant q-2p-2$, $P_{k,\frac{p}{q}} $ $
		\triangleleft $ $ P_{k+1, \frac{p}{q}}$. 	
	\end{lemma}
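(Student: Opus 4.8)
The goal is to compare the positions of the orbits $P_{k,\frac{p}{q}}$ and $P_{k+1,\frac{p}{q}}$ inside the horseshoe map $H_2$, where position is measured by the $\triangleleft$ ordering, i.e. by the heights $H_2(M_r)$ of the absolute maximum and $H_2(m_r)$ of the absolute minimum. Since for $1\le r\le q-2p-1$ the absolute maximum $M_r$ sits in $I_1$ and the absolute minimum $m_r$ sits in $I_2$, and since $H_2$ is decreasing on $I_1$ and increasing on $I_2$, the inequality $H_2(M_k)>H_2(M_{k+1})$ is equivalent to $M_k<M_{k+1}$ (a point of $I_1$ closer to $c_1$), while $H_2(m_k)>H_2(m_{k+1})$ is equivalent to $m_k>m_{k+1}$ (a point of $I_2$ closer to $1$). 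So the lemma reduces to the two spatial claims $M_k<M_{k+1}$ and $m_k>m_{k+1}$ in the horseshoe.

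The natural tool is Theorem~\ref{compare:itinerary}: the spatial order of two points in $[0,1]$ is decided by the $\succ$-order of their itineraries, which in turn is read off from the combinatorics of the patterns $\Gamma_{k,\frac{p}{q}}$ and $\Gamma_{k+1,\frac{p}{q}}$ via the explicit formula for $\Pi_{r,\frac{p}{q}}$. First I would identify which labelled point of $P_{r,\frac{p}{q}}$ is the absolute maximum $M_r$ and which is the absolute minimum $m_r$, using the color scheme: the absolute maximum of a bimodal over-twist orbit is the image of a point in the max flat spot region, so $M_r$ should be the point $x_j$ that is the image (under the pattern) of the "highest" preimage; concretely one expects $M_r$ to correspond to the last green point or the point mapped to $x_q$, and $m_r$ to correspond to the point mapped to $x_1$. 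Having pinned down $M_r$ and $m_r$ as specific indices $j_M(r), j_m(r)$ in spatial labelling, I would then write down the itineraries $\mathcal I(M_k)$, $\mathcal I(M_{k+1})$ as sequences of symbols from $\{I_0,I_1,I_2\}$ dictated by which lap each iterate lands in (red $\to I_0$, green/pink $\to I_1$, blue $\to I_2$), and compare them symbol by symbol: find the first place $\kappa$ where the two itineraries differ, compute the sign product $\theta(\kappa-1)$ along the way (each $I_0$ and $I_2$ contributes $+1$ since $H_2$ is increasing there, each $I_1$ contributes $-1$), and conclude which itinerary is $\succ$-larger, hence which point is spatially larger.

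The key structural input is that passing from $\Gamma_{k,\frac{p}{q}}$ to $\Gamma_{k+1,\frac{p}{q}}$ shifts the red/green/pink/blue block boundaries by exactly one: one blue point becomes, in effect, reassigned so that the red block grows by one and the blue block shrinks by one (this is exactly the role of the parameter $r$ in $\Pi_{r,\frac{p}{q}}$). I would make this precise by tracking, for the distinguished orbit points, at which iterate the itineraries of the $k$- and $(k+1)$-orbits first see a point in a different lap — this happens because a point that was blue (in $I_2$) for one pattern is red (in $I_0$) for the other, forcing the first discrepancy to be of the form "$I_0$ versus $I_2$" or "$I_1$ versus something", which together with the sign bookkeeping yields the desired spatial inequality in a controlled way.

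**Main obstacle.**
The hardest part will be the itinerary bookkeeping: correctly identifying the spatial indices of $M_r$ and $m_r$ from the permutation formula, and then carefully locating the first coordinate at which $\mathcal I(M_k)$ and $\mathcal I(M_{k+1})$ (resp. $\mathcal I(m_k)$ and $\mathcal I(m_{k+1})$) diverge while keeping the running sign $\theta$ correct, since a single miscount in the parity of the number of visits to $I_1$ flips the conclusion. I expect that once the distinguished points are correctly identified, the divergence occurs after a short, explicitly computable initial segment (governed by how far the orbit travels before the red/blue reassignment "kicks in"), so the sign product is a finite product one can evaluate by hand; the delicacy is entirely in that identification and in handling the boundary cases $k=1$ and $k=q-2p-2$ where one of the colored blocks is nearly empty. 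A clean way to organize this is to prove a single auxiliary combinatorial statement — that the itinerary of the distinguished point of $P_{r,\frac{p}{q}}$, as a word in $\{I_0,I_1,I_2\}$, is strictly monotone in $r$ with respect to $\succ$ up to the relevant coordinate — and then read both inequalities $M_k<M_{k+1}$ and $m_k>m_{k+1}$ off of it.
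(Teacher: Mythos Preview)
Your plan is essentially the paper's own argument: identify $M_r$ and $m_r$ through the color decomposition of $\Gamma_{r,\frac{p}{q}}$, compare itineraries symbol by symbol, locate the first discrepancy at the moment one orbit has exhausted its red block while the other still has one red point left, check the parity of visits to the decreasing lap, and invoke Theorem~\ref{compare:itinerary}. The paper organizes the itinerary tracking into ``rounds'' (pink $\to$ red $\to$ green $\to$ blue) and compares $\mathcal I(H_2(m_k))$ with $\mathcal I(H_2(m_{k+1}))$ directly rather than first reducing via lap-monotonicity to a spatial inequality on $m_k,m_{k+1}$, but the content is the same; note, however, that in the proof $m_r$ is taken to be the last \emph{pink} point and hence lies in $I_1$, so your reduction step for the minima would flip direction, though this does not affect the method.
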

	
	\begin{proof}
		Since, $1 \leqslant k \leqslant q-2p-2$, both $P_{k,\frac{p}{q}} $ and
		$P_{k+1,\frac{p}{q}}$ contains points of all colors. Let us compare the
		itineraries of $m_k$ and $m_{k+1}$. They are both \emph{pink} points in
		their respective orbits. On application of $H_2$ both of these points will map to the
		first red point $\mathcal{R}_1^{k}$ and $\mathcal{R}_1^{k+1}$ of their
		respective orbits, then to a \emph{green point}, then to a \emph{blue
			point} and then finally to another \emph{pink point} of their
		respective orbits. Let us call this one \emph{round} of its orbit.

Both $m_k$ and $m_{k+1}$ will keep on performing such \emph{rounds} and their itineraries will remain the same until the auspicious moment when $m_{k+1}$ hits the
		last \emph{red point} $\mathcal{R}_k^{k+1}$ of its orbit. Since the number of \emph{red points} in $P_{k,\frac{p}{q}}$ is one less than
		that of $P_{k+1, \frac{p}{q}}$, at that moment $m_k$ will hit the first
		green point $\mathcal{G}_1^{k}$ of its orbit. Observe that
		$P_{k,\frac{p}{q}}$ attains its absolute maximum at
		$\mathcal{G}_1^{k}$, that is, $M_k = \mathcal{G}_1^{k}$. So, on the
		next step while $m_{k+1}$ will hit a green point with corresponding
		symbol $I_1$, $m_k$ will hit the last \emph{blue} point
		$\mathcal{B}^k_{q-2p-k}$ with symbol $I_2$.

		Finally, observe that in each
		\emph{round} both $m_k$ and $m_{k+1}$ reach the decreasing lapse $I_2$ of
		$H_2$ an even number of times. So, it follows that the itinerary $H(m_k)$
		is \emph{stronger} than that of  $H(m_{k+1})$. So, by Theorem \ref{compare:itinerary} it follows that $H(m_{k+1}) > H(m_k)$.
		
		Similarly, it can be seen that the itinerary of   $H_2(M_{k+1})$ is
		\emph{stronger} that for  $H_2(M_{k})$ and hence by
		Theorem \ref{compare:itinerary} the result follows. 	
	\end{proof}

	\begin{lemma} \label{floating:above:2}
		$\Gamma_{0,\frac{p}{q}} $  $ \triangleleft $ $\Gamma_{1,\frac{p}{q}}$.
	\end{lemma}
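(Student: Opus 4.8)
The plan is to check the two inequalities defining $\triangleleft$, namely $H_2(M_0)>H_2(M_1)$ and $H_2(m_0)>H_2(m_1)$ — writing $P_0=P_{0,\frac{p}{q}}$, $P_1=P_{1,\frac{p}{q}}$ and likewise for $M_i,m_i$ — by comparing the itineraries of the relevant points and invoking Theorem~\ref{compare:itinerary}, exactly in the style of the proof of Lemma~\ref{floating:above:1}. One may assume $q-2p\geq 2$, so that $\Gamma_{1,\frac{p}{q}}$ is strictly bimodal; the case $q-2p=1$ is easier, because then $H_2(M_1)$ lies in $I_1$ while $H_2(M_0)$ lies in $I_2$, so both inequalities follow at once from the fact that $I_2$ lies entirely to the right of $I_1$, which lies entirely to the right of $I_0$.

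For the minima the argument is immediate. By the permutation formula for $\Pi_{0,\frac{p}{q}}$ the point $H_2(m_0)$ is the leftmost point of $P_0$, which is its first green point and hence lies in $I_1$; by the formula for $\Pi_{1,\frac{p}{q}}$ the point $H_2(m_1)$ is the leftmost point of $P_1$, which is its unique red point and hence lies in $I_0$. The common endpoint $\frac13$ of $I_0$ and $I_1$ is not $H_2$-periodic (it is sent to the fixed point $1$), so neither point equals $\frac13$, and therefore $H_2(m_0)>H_2(m_1)$.

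For the maxima, $H_2(M_0)$ and $H_2(M_1)$ are the rightmost points of $P_0$ and $P_1$ respectively, and both now lie in $I_2$, so I would compare $\mathcal{I}(H_2(M_0))$ and $\mathcal{I}(H_2(M_1))$ directly. Following both orbits simultaneously and recording the spatial rank of the current point within its own orbit, both rank sequences start at $q$; since $\Pi_{0,\frac{p}{q}}$ and $\Pi_{1,\frac{p}{q}}$ agree precisely on the ``common blue'' range $\{2p+2,\dots,q\}$, the two rank sequences, hence the two symbol sequences, coincide as long as they remain in that range. I would then analyse the behaviour after the first descent below $2p+2$ and show, from the permutation formulas, that from then on the two orbits occupy equal ranks except at isolated steps where both points lie in $I_1$, up to the step at which the orbit of $H_2(M_0)$ reaches rank $2p+1$: that rank is a blue point of $P_0$, so it carries the symbol $I_2$, whereas rank $2p+1$ is the last pink point of $P_1$, so it carries the symbol $I_1$; this is the first place where the two itineraries differ, and there the symbol of $\mathcal{I}(H_2(M_0))$ is the strictly larger one. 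To get $\mathcal{I}(H_2(M_0))\succ\mathcal{I}(H_2(M_1))$ it remains to see that the sign accumulated along the common initial block equals $+1$: since $P_0$ has no red points its orbit visits only $I_1$ and $I_2$, and since a blue point of $P_0$ never maps to a green point while a pink point always maps to a green point, every maximal run of consecutive $I_1$-symbols in the itinerary of a point of $P_0$ has even length; because the first discrepancy occurs at an $I_2$-position of $\mathcal{I}(H_2(M_0))$, the preceding block contains an even number of $I_1$-symbols, so the accumulated sign is $+1$. Then Theorem~\ref{compare:itinerary} yields $H_2(M_0)>H_2(M_1)$, and together with the inequality for the minima this gives $P_0\triangleleft P_1$, i.e.\ $\Gamma_{0,\frac{p}{q}}\triangleleft\Gamma_{1,\frac{p}{q}}$.

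The step I expect to be the main obstacle is pinning down, precisely and in all cases, the first position at which $\mathcal{I}(H_2(M_0))$ and $\mathcal{I}(H_2(M_1))$ differ: this requires a somewhat delicate case analysis of how the green, pink and blue blocks of $P_0$ and $P_1$ overlap under iteration (in particular whether $q\geq 3p$ and the residue of $q$ modulo $p$), entirely in the spirit of the proof of Lemma~\ref{floating:above:1}. By contrast the sign computation is robust, as it uses only the coarse ``pink $\to$ green, blue $\not\to$ green'' structure of the unimodal orbit $P_0$.
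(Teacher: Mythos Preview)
Your proposal is correct and follows essentially the same route as the paper: both arguments track the common rank sequence of $H_2(M_0)$ and $H_2(M_1)$ until it reaches rank $2p+1$ (where $P_0$ reads $I_2$ and $P_1$ reads $I_1$) and then appeal to a parity count of $I_1$-symbols for the sign. Your flagged ``main obstacle'' dissolves once you note that the composite pink\,$\to$\,green\,$\to$\,next sends rank $j$ to $q-2p+j$ for \emph{both} orbits, so the two rank sequences resynchronise after each pink--green excursion and hence coincide all the way to rank $2p+1$; the paper glosses over this resynchronisation and in fact records the first discrepancy one step too late (pink versus red rather than blue versus pink), so your version is the more precise of the two.
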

	
	\begin{proof}
		$P_{0,\frac{p}{q}}$ has no red points. Comparing the itineraries of
		$m_0$ and $m_1$, it is easy to see that
		$\mathcal{I}(H_2(m_0))=(I_1,I_2,\dots)$ while
		$\mathcal{I}(H_2(m_1))=(I_0,I_1,\dots)$. Thus, clearly $\mathcal{I}(H_2(m_0))$ is \emph{stronger} (in the sense of $\succ$ )  than $\mathcal{I}(H_2(m_1))$. Thus, by Theorem \ref{compare:itinerary}, $H_2(m_0) > H_2(m_1)$.
		
		The \emph{itineraries} of  $M_0$ and $M_1$ will remain the same till the moment $M_0$ hits the first \emph{blue point} from the left in the spatial labelling :$\mathcal{B}_1^0$ of its  orbit $P_{0,\frac{p}{q}}$. At that point the point $M_1$ maps to the last pink point $\mathcal{P}_q^1$ of its orbit $P_{1,\frac{p}{q}}$ in the spatial labelling. Next application of $H_2$ maps $M_0$ to some \emph{pink} point of its orbit while $M_1$ maps to the first \emph{red point} $\mathcal{R}_1^1$ of
		its orbit. Arguing as in Lemma \ref{floating:above:1} it follows that
		 $H_2(M_0)> H_2(M_1)$
	\end{proof}
	
	Similarly we can prove that:
	
	\begin{lemma} \label{floating:above:3}
		$\Gamma_{q-2p-1,\frac{p}{q}} $  $ \triangleleft $
		$\Gamma_{q-2p,\frac{p}{q}}  $ 	
	\end{lemma}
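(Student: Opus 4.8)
The plan is to mirror the argument of Lemma \ref{floating:above:2} (the case $r=0$ versus $r=1$), only now working at the top of the tower rather than at the bottom. Recall that $\Gamma_{q-2p,\frac{p}{q}}$ is the other unimodal pattern in $\mathcal P_{\frac{p}{q}}$: its periodic orbit $P_{q-2p,\frac{p}{q}}$ has \emph{no blue points}, exactly as $P_{0,\frac{p}{q}}$ has no red points. I would compare the itineraries of $m_{q-2p-1}$ with $m_{q-2p}$, and of $M_{q-2p-1}$ with $M_{q-2p}$, using Theorem \ref{compare:itinerary}, and show in each case that the orbit with the missing color class ``floats above'' the one with all four colors present.

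First I would set up the bookkeeping. For $r=q-2p-1$ the orbit has exactly one blue point, while for $r=q-2p$ it has none; both have $r$ red points, $p$ green points and $p$ pink points. Following a point through one ``round'' (pink $\to$ red $\to$ green $\to$ blue $\to$ pink, as in Lemma \ref{floating:above:1}, but now the red block is long and the blue block is short or empty), I would track the itineraries of $m_{q-2p-1}$ and $m_{q-2p}$ step by step. They agree until the auspicious moment when the orbit $P_{q-2p,\frac{p}{q}}$ runs out of blue points one step sooner than $P_{q-2p-1,\frac{p}{q}}$: at that moment one orbit is forced into the increasing lap $I_1$ (symbol $I_1$) while the other is still in the decreasing lap $I_2$ (symbol $I_2$), and, as before, both have visited $I_2$ an even number of times up to that point so that the sign $\theta$ is $+1$ and $\succ$ is well defined. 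This pins down the strict inequality in one coordinate; the analogous comparison of $M_{q-2p-1}$ and $M_{q-2p}$, again exactly as in Lemma \ref{floating:above:2} with the roles of the ``missing'' color class transposed from red to blue, pins down the other. Hence $H_2(M_{q-2p-1})>H_2(M_{q-2p})$ and $H_2(m_{q-2p-1})>H_2(m_{q-2p})$, which is precisely $\Gamma_{q-2p-1,\frac{p}{q}}\triangleleft\Gamma_{q-2p,\frac{p}{q}}$.

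The main obstacle I anticipate is purely combinatorial rather than conceptual: one must be careful about where the absolute maximum $M_{q-2p-1}$ actually sits inside the orbit (it is a green point, as in Lemma \ref{floating:above:1}, being the image of a pink point) and about the exact step at which the two itineraries first disagree, since the lengths of the red and blue blocks are asymmetric at this end of the tower. As long as one confirms that both orbits traverse $I_2$ an even number of times before the branching step, so that the comparison order $\succ$ is defined with sign $+1$, the argument closes verbatim along the lines of Lemma \ref{floating:above:2}. For this reason the paper rightly states it as ``Similarly we can prove that,'' and I would present the proof as a short remark indicating the two itinerary comparisons and invoking Theorem \ref{compare:itinerary}, rather than repeating the full round-by-round description.
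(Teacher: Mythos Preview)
Your approach is exactly what the paper intends: it gives no proof and simply writes ``Similarly we can prove that,'' meaning one transposes the argument of Lemma~\ref{floating:above:2} (and Lemma~\ref{floating:above:1}) with the roles of red and blue exchanged. Your outline of tracking itineraries until the first disagreement and then invoking Theorem~\ref{compare:itinerary} is the right mechanism.

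Two small slips to clean up before presenting it. First, your verbal summary ``the orbit with the missing color class floats above the one with all four colors present'' is the wrong way round here: that was true at the bottom of the tower ($P_{0}$ has no red and floats above $P_{1}$), but at the top it is $P_{q-2p-1}$ (which still has one blue point) that floats above $P_{q-2p}$ (which has none). Your displayed inequalities $H_2(M_{q-2p-1})>H_2(M_{q-2p})$ and $H_2(m_{q-2p-1})>H_2(m_{q-2p})$ are the correct ones, so just make the prose match them. Second, you interchanged the monotonicity of the laps: on $H_2$ the lap $I_1=(\tfrac13,\tfrac23)$ is \emph{decreasing} and $I_2=(\tfrac23,1]$ is \emph{increasing}, so the sign count for $\theta$ tracks visits to $I_1$, not $I_2$. (The paper's own proof of Lemma~\ref{floating:above:1} contains the same slip when it speaks of ``the decreasing lapse $I_2$''.) Once these labels are corrected, the parity argument and the conclusion go through as you describe.
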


	Using Lemma \ref{floating:above:1}, \ref{floating:above:2},
	\ref{floating:above:3}  we
	can describe the configuration of the \emph{leading set}
	$L_{\frac{p}{q}}$ in $\mathcal{P}$. For this we need the following
	definition:
	
	\begin{definition}\label{staircase}
		Let $A=[a,b] \times [a,b]$ where $a,b \in \R$. Let $P_1 = \{ a=x_0,
		x_1, \dots, x_{n+1} = b\}$ and $P_2 = \{ a=y_0, y_1, \dots, y_{n+1} =
		b\}$ be two \emph{partitions} of $[a,b]$.
		
		By a $n$-\emph{staircase} $\mathcal{S}_n$ in $A$ with $2n$ steps denoted by $S_n$  we mean:
		
		\begin{enumerate}
			\item A set of $n$ \emph{vertical line segments} : $\{x_1 \} \times
			[y_1, y_2], $ $\{x_2 \} \times [y_2, y_3], $ $ \dots, \{x_n \}
			\times [y_n, y_{n+1} = b]$ (\emph{called rises}).
			\item A set of $n$ \emph{horizontal line segments} :$[a=x_0,x_1]
			\times \{y_1 \}, $ $[x_1, x_2] \times \{y_2\}, \dots, [x_{n-1}
			, x_{n}] $ $\times \{ y_n \}$ (\emph{called treads}). 	
		\end{enumerate}
		
		$x_1 - x_0$ is called the length of the \emph{initial tread} while
		$y_{n+1} - y_n $ is called the length of the \emph{final rise}. The
		lengths $ \Delta x_i = x_{i+1}-x_i$ and $\Delta y_i = y_{i+1}-y_i$ are
		called the lengths of the \emph{ith tread} and \emph{ith rise}
		respectively for $1 \leqslant i \leqslant n$.
		
		A staircase $S_n$ is said to be \emph{symmetrical} if $\Delta x_{i-1} =
		\Delta y_{n+1-i}$ for every $ i \in \{1,2 \dots n+1\}$.
	\end{definition}
	\begin{theorem}\label{L:structure:rational}
		$Z_{\frac{p}{q}}$ is a symmetrical staircase with $2(q-2p+1)$ steps in
		$\mathcal{P}$ with initial tread and final rise lying in the \emph{side
			arm} $\mathcal{S}$.
	\end{theorem}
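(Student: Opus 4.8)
The plan is to assemble the staircase $Z_{\frac{p}{q}}$ out of the $q-2p+1$ cycles in $\mathcal{P}_{\frac{p}{q}}$, one ``corner'' of the staircase per cycle, ordered by the floating relation $\triangleleft$. First I would record the elementary geometry: a point $(\alpha,\beta)\in\mathcal{P}$ has $H_{\alpha,\beta}$ exhibiting an over-twist pattern of over-rotation number $\frac pq$ on a flat spot precisely when one of the flat spots $\xi^{max}_\alpha$, $\xi^{min}_\beta$ ``catches'' an over-twist orbit of $H_2$; and by Lemma \ref{distance:pairs} together with Theorem \ref{t:sharp}, the condition $\psi((\alpha,\beta))=\frac pq$ pins such a point to the boundary between the region where some over-twist orbit of number $\frac pq$ survives truncation and the region where it does not. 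Concretely, for a fixed cycle $P_{r,\frac pq}$ of $H_2$ with absolute max $M_r$ and absolute min $m_r$, the orbit survives the truncation $H_{\alpha,\beta}$ iff $\alpha\le H_2(M_r)$ and $\beta\ge H_2(m_r)$ (no point of the orbit is flattened). So the locus of parameters for which $P_{r,\frac pq}$ is still a genuine over-twist periodic orbit and for which no over-twist orbit of strictly smaller over-rotation number survives is a single ``reentrant corner'': the vertical segment $\{\alpha = H_2(M_r)\}$ in the relevant $\beta$-range, joined to the horizontal segment $\{\beta = H_2(m_r)\}$.

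Next I would use Lemmas \ref{floating:above:1}, \ref{floating:above:2}, \ref{floating:above:3} to stack these corners into an honest staircase. Those lemmas say $P_{0,\frac pq}\triangleleft P_{1,\frac pq}\triangleleft\cdots\triangleleft P_{q-2p,\frac pq}$, i.e. $H_2(M_r)$ is strictly increasing in $r$ while $H_2(m_r)$ is strictly decreasing in $r$. Interpreting this in the parameter plane: as $r$ increases, the surviving-corner of $P_{r,\frac pq}$ moves to a larger $\alpha$-coordinate and a smaller $\beta$-coordinate, i.e. the corners march monotonically down-and-to-the-right, which is exactly the combinatorial shape of a staircase descending from the base set $\mathcal{B}$ toward the side arm $\mathcal{S}$. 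Taking $x_i := H_2(M_{i-1})$ and $y_i$ (in reversed order) $:= H_2(m_{i-1})$ produces the two partitions of Definition \ref{staircase}; the number of corners is $q-2p+1$, giving $2(q-2p+1)$ segments (steps). One then checks that the union of these $2(q-2p+1)$ treads and rises is exactly $Z_{\frac pq}$: any $(\alpha,\beta)$ on a tread or rise has $\psi=\frac pq$ by the distance lemma (it sits on the extreme surviving level of some over-twist orbit and no lower one survives), and conversely a point with $\psi((\alpha,\beta))=\frac pq$ together with one flat spot carrying an over-twist pattern of that number must equal $H_2(M_r)$ in its $\alpha$-coordinate or $H_2(m_r)$ in its $\beta$-coordinate for some $r$, hence lies on the staircase.

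The initial tread and final rise lie in the side arm $\mathcal{S}$ because the extreme cases $r=0$ and $r=q-2p$ are the unimodal over-twist patterns $\gamma_{\frac pq}$: the $r=0$ orbit has no blue points (all orbit points are ``below'' the min turning value for the relevant parameter), so catching it only constrains $\beta$, forcing $\alpha=1$ — the vertical side of $\mathcal{S}$; symmetrically the $r=q-2p$ orbit has no red points and forces $\beta=0$ — the horizontal side of $\mathcal{S}$. Finally, symmetry of the staircase ($\Delta x_{i-1}=\Delta y_{n+1-i}$) follows from the reflection symmetry of $H_2$ itself: the involution $x\mapsto \frac23 - x$ on a neighborhood of the critical points (more precisely, the conjugacy of $H_2$ by $x\mapsto 1-x$ composed with the relabeling swapping $I_0\leftrightarrow I_2$) sends $P_{r,\frac pq}$ to $P_{q-2p-r,\frac pq}$ and interchanges the roles of $M$ and $m$, whence $H_2(M_r)$ and $1-H_2(m_{q-2p-r})$ (equivalently the corresponding consecutive differences) match up.

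The main obstacle I anticipate is the ``conversely'' direction of the last step: proving that the two listed conditions in Definition \ref{leading:set:rational} genuinely force $(\alpha,\beta)$ onto one of the finitely many levels $H_2(M_r)$ or $H_2(m_r)$, rather than onto some other parameter value where a non-over-twist orbit of over-rotation number $\frac pq$ happens to be the ``last survivor''. Here one must invoke Theorem \ref{t:sharp} (piecewise-monotone maps always realize an over-twist pattern of every over-rotation number in their interval) to guarantee that at the boundary of the surviving region it is precisely an over-twist orbit that is on the verge of disappearing, together with the uniqueness of the cycles $P_{r,\frac pq}$ in $H_2$ and the explicit combinatorics (red/green/pink/blue partition, and the fact that red points sit in $I_0$, green and pink in $I_1$, blue in $I_2$) to identify which coordinate is being pinned. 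Managing the bookkeeping at the joints of the staircase — showing a corner point is simultaneously the end of one tread and the start of one rise, and that consecutive corners do not overlap or skip — is the other place where care is needed, but it reduces to the strict monotonicity already supplied by the floating lemmas.
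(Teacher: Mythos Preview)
Your strategy matches the paper's: both build the staircase corner-by-corner from the cycles $P_{r,\frac pq}$, ordered via Lemmas \ref{floating:above:1}--\ref{floating:above:3}, and both deduce symmetry from the symmetry of $H_2$. The paper phrases this operationally (start at $(1,\tfrac12)$, ``waggle'' $(\alpha,\beta)$ and watch when a flat spot catches each successive orbit $P_{r,\frac pq}$), while you phrase it structurally (identify each corner with the pair $(H_2(M_r),H_2(m_r))$), but the content is the same.

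That said, several directions in your write-up are reversed, and if followed literally they produce the wrong picture. First, the survival condition for $P_{r,\frac pq}$ under truncation is $\alpha\ge H_2(M_r)$ and $\beta\le H_2(m_r)$, not the reverse: the flat-spot levels must lie \emph{outside} the range of the orbit for it to remain intact. Second, by the definition of $\triangleleft$ (namely $P_{r_2}\triangleleft P_{r_1}$ iff $H_2(M_{r_2})>H_2(M_{r_1})$ and $H_2(m_{r_2})>H_2(m_{r_1})$), the chain $P_0\triangleleft P_1\triangleleft\cdots\triangleleft P_{q-2p}$ says that $H_2(M_r)$ and $H_2(m_r)$ are \emph{both} strictly decreasing in $r$; your claim that $H_2(M_r)$ increases while $H_2(m_r)$ decreases is wrong. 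Third, it is $r=0$ that has no \emph{red} points and $r=q-2p$ that has no \emph{blue} points, the opposite of what you wrote, so your identification of which unimodal orbit anchors which side of $\mathcal{S}$ is also swapped. Once these signs are corrected the corners $(H_2(M_r),H_2(m_r))$ march monotonically down-and-to-the-left as $r$ grows, the staircase runs from the vertical side $\{\alpha=1\}$ of $\mathcal{S}$ (where $P_{0,\frac pq}$ is caught) to the horizontal side $\{\beta=0\}$ (where $P_{q-2p,\frac pq}$ is caught), and the rest of your outline goes through exactly as in the paper.
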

	
	\begin{proof}
		The location of the points $m_r$ and $M_r$ of the over-twist periodic
		orbits $P_{r,\frac{p}{q}}$ in $H_2$ is elucidated by  Lemma
		\ref{criticalpoints:overtwist:location}. To describe $Z_{\frac{p}{q}}$
		we take an arbitrary point $(\alpha, \beta)$ in $\mathcal{P}$ and
		waggle it in $\mathcal{P}$ and look for moments when at least one of
		$\xi_{\alpha}^{max}$ and $\xi_{\beta}^{min}$ is part of
		$P_{r,\frac{p}{q}}$ for some $r$. First we take $\alpha =1 $ and $\beta
		= \frac{1}{2}$. At this point the map $H_{\alpha, \beta}$ has no
		\emph{non-fixed periodic} point and hence its \emph{dynamics} is not
		interesting. No,we slowly \emph{decrease} the value of $\beta $ keeping
		$\alpha$ fixed at $1$. There will be some unique value $\beta_0$ of
		$\beta$ when $\xi_{\beta}^{min}$ will hit $H_2(m_0)$. At this moment
		$\xi_{\beta}^{min}$ is part of the orbit $P_{0,\frac{p}{q}}$ while
		$\xi_{\alpha}^{max}$ is not. Now, we fix $\beta = \beta_0$ and slowly
		decrease $\alpha$. For some time only $\xi_{\beta}^{min}$ will be part
		of $P_{0,\frac{p}{q}}$, but then we will encounter some unique value
		$\alpha = \alpha_0$ of $\alpha$ when $\xi_{\alpha}^{max}$ hits
		$H_2(M_0)$. At the point $(\alpha_0,\beta_0)$ both $\xi_{\alpha}^{max}$
		and $\xi_{\beta}^{min}$ will be part of $P_{0,\frac{p}{q}}$. Now, we
		keep $\alpha = \alpha_0$ and slowly decrease $\beta$. As $\beta$ moves
		away from $\beta_0$, $\xi_{\beta}^{min}$ will no longer be part of
		$P_{0,\frac{p}{q}}$ and only $\xi_{\alpha}^{max}$ will remain part of
		$P_{0,\frac{p}{q}}$. This will continue for some time. Then for some
		$\beta = \beta_1$, $\xi_{\beta}^{min}$ will hit $H_2(m_0)$. At that
		moment $\xi_{\beta}^{min}$ and $\xi_{\alpha}^{max}$ will both be
		periodic but parts of different orbits. While $\xi_{\beta}^{min}$ is
		now part of $P_{1,\frac{p}{q}}$, $\xi_{\alpha}^{max}$ will remain part
		of $P_{0,\frac{p}{q}}$. Now, keeping keeping $\beta$ fixed at $\beta_1$
		we decrease $\alpha$ from $\alpha_0$. Now, the orbit
		$P_{0,\frac{p}{q}}$ is completely cut off from our map
		$H_{\alpha,\beta}$ and for some time only $\xi_{\beta}^{min}$ will
		remain part of $P_{1,\frac{p}{q}}$ until for some value of $\alpha_1$
		of $\alpha$, $\xi_{\alpha}^{max}$ hits $H_2(M_1)$ and both \emph{flat
			spots} become part of $P_{1,\frac{p}{q}}$. Repeating the process
		$q-2p+1$ times( for each of the $q-2p+1$ \emph{over-twist periodic
			orbits} $P_{r,\frac{p}{q}}$ ), it is easy to see that $Z_{\frac{p}{q}}$
		is a \emph{staircase} with $2(q-2p+1)$ steps. The \emph{symmetry} of
		the \emph{staircase} follows from the \emph{symmetry} of $H_2$.	
	\end{proof}
	
	If $ (\alpha,\beta) \in Z_{\frac{p}{q}}$, then by definition $
	\psi((\alpha,\beta))= \frac{p}{q}$.  The next illustrates what happens
	if $ \psi((\alpha,\beta))= \frac{p}{q}$ but $ (\alpha,\beta) \notin
	Z_{\frac{p}{q}}$:
	
	\begin{lemma}\label{kappa}
		If $ \psi((\alpha,\beta))= \frac{p}{q}$ but $ (\alpha,\beta) \notin
		Z_{\frac{p}{q}}$,  then there exists $\kappa_1, \kappa_2 \geq 0$ such
		that $ (\alpha - \kappa_1, \beta + \kappa_2) \in Z_{\frac{p}{q}}$.
	\end{lemma}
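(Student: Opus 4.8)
The plan is to exploit Lemma \ref{distance:pairs} together with the staircase structure of $Z_{\frac{p}{q}}$ established in Theorem \ref{L:structure:rational}. Suppose $\psi((\alpha,\beta)) = \frac{p}{q}$ but $(\alpha,\beta) \notin Z_{\frac{p}{q}}$. First I would record what it means for a point to lie in $Z_{\frac{p}{q}}$ versus merely having $\psi$-value $\frac{p}{q}$: a point of $Z_{\frac{p}{q}}$ is one where at least one flat spot $\xi^{max}_{\alpha}$ or $\xi^{min}_{\beta}$ actually realizes an over-twist pattern of over-rotation number $\frac{p}{q}$, while a point off $Z_{\frac{p}{q}}$ with the same $\psi$-value has both flat spots ``too small'' — neither flat spot has yet grown enough to capture such an orbit, although the over-rotation interval of $H_{\alpha,\beta}$ is nonetheless $[\frac{p}{q},\frac12]$ because it is pinched down to $\frac{p}{q}$ by the horseshoe structure outside the flat spots.

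The key geometric observation is monotonicity along the ``enlargement'' directions. Decreasing $\alpha$ enlarges $\xi^{max}_{\alpha}$, and increasing $\beta$ enlarges $\xi^{min}_{\beta}$; by the proof of Lemma \ref{distance:pairs} and its variants, moving in either of these directions can only weakly increase $\psi$. So consider the ray from $(\alpha,\beta)$ in the direction of decreasing $\alpha$: since $\psi$ is already $\frac{p}{q}$ and cannot decrease along this ray, $\psi$ stays $\ge \frac{p}{q}$. I would then argue that as $\alpha$ decreases to the value where $\xi^{max}_{\alpha}$ first hits $H_2(M_r)$ for the appropriate $r$ (the value determined by the floating order of Lemmas \ref{floating:above:1}--\ref{floating:above:3}), either we land on $Z_{\frac{p}{q}}$ directly, or $\psi$ has jumped strictly above $\frac{p}{q}$. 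In the latter case there must be an intermediate value of $\alpha$, call it $\alpha - \kappa_1$, at which the transition occurs — and at that precise value one of the flat spots exhibits the over-twist pattern of over-rotation number exactly $\frac{p}{q}$ (this is where the ``waggling'' argument of Theorem \ref{L:structure:rational} applies locally). The subtlety is that one may need to move in \emph{both} coordinates: first decrease $\alpha$ by $\kappa_1$, then, from that point, increase $\beta$ by $\kappa_2$, until a flat spot first latches onto one of the orbits $P_{r,\frac{p}{q}}$. The staircase structure guarantees such a corner or edge of $Z_{\frac{p}{q}}$ is reachable by this monotone two-step motion while keeping $\psi$ pinned at $\frac{p}{q}$ throughout (it cannot rise above $\frac{p}{q}$ on the path, since we stop at the first contact).

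The main obstacle will be verifying that $\psi$ does not overshoot $\frac{p}{q}$ along the path before we reach $Z_{\frac{p}{q}}$: a priori, enlarging a flat spot could cause $\psi$ to jump past $\frac{p}{q}$ without ever passing through it, if no orbit of over-rotation number exactly $\frac{p}{q}$ is ``available'' at the intermediate parameter. Here I would use Theorem \ref{t:sharp} (sharpness for piecewise-monotone maps): since $\frac{p}{q} \in [r_{H_{\alpha,\beta}},\frac12] = I_{\alpha,\beta}$ at the starting point, there is a periodic orbit realizing an over-twist pattern of over-rotation number $\frac{p}{q}$ somewhere in $H_{\alpha,\beta}$ — but it is not yet ``inside'' a flat spot. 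As we enlarge a flat spot continuously, this orbit (or one of the finitely many orbits $P_{r,\frac{p}{q}}$ classified in Section 2.3) must be the \emph{first} periodic orbit swallowed into a flat spot, precisely because the over-twist patterns of over-rotation number $\frac{p}{q}$ are the ones sitting lowest in the forcing order at that level; combined with the floating order $\triangleleft$ of Lemmas \ref{floating:above:1}--\ref{floating:above:3}, this forces the first contact to be with some $P_{r,\frac{p}{q}}$, i.e.\ a point of $Z_{\frac{p}{q}}$, and no overshoot occurs. Setting $\kappa_1,\kappa_2 \ge 0$ to be the coordinate displacements to this first-contact point completes the proof.
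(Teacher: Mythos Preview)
Your proposal reaches the right conclusion, but by a far more circuitous route than necessary, and one of your auxiliary claims is not justified.

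The paper's proof is essentially four lines. Since $\psi((\alpha,\beta))=\frac{p}{q}$, Theorem~\ref{t:sharp} gives a cycle $P$ of $H_{\alpha,\beta}$ exhibiting an over-twist pattern of over-rotation number $\frac{p}{q}$. Since $(\alpha,\beta)\notin Z_{\frac{p}{q}}$, no flat spot meets $P$, so $P$ lies entirely in $[0,1]\setminus(\xi^{max}_{\alpha}\cup\xi^{min}_{\beta})$. Now simply enlarge one flat spot (decrease $\alpha$ by $\kappa_1$, or increase $\beta$ by $\kappa_2$) until it first touches a point of $P$; at that moment the flat spot exhibits the over-twist pattern, and since $P$ still persists one has $\psi\le\frac{p}{q}$ while monotonicity gives $\psi\ge\frac{p}{q}$, so the new parameter lies in $Z_{\frac{p}{q}}$. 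That is the whole argument.

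Your detour through the staircase structure, the floating order $\triangleleft$, and the worry about ``overshooting'' is unnecessary once you have the specific orbit $P$ in hand: $P$ survives unchanged until a flat spot reaches it, so $\frac{p}{q}$ never leaves the over-rotation interval along the way. More seriously, your claim that the over-twist orbit ``must be the \emph{first} periodic orbit swallowed into a flat spot, precisely because the over-twist patterns\dots are the ones sitting lowest in the forcing order'' conflates combinatorial forcing with geometric location in $[0,1]$; forcing minimality does not tell you which orbit is geometrically closest to the flat spots, and in fact other orbits (with larger over-rotation numbers, say) may well be absorbed first. Fortunately you do not need this claim at all: you need only stop at the moment of first contact with the specific $P$ you already produced, not at the first contact with \emph{any} periodic orbit.
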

	
	\begin{proof}
		Since, $\psi(\alpha,\beta)=\frac{p}{q}$, so $H_{\alpha,\beta}$ has  a
		periodic orbit $P$ which exhibits an over-twist pattern corresponding
		to over-rotation number $\frac{p}{q}$. Also, since, $ (\alpha, \beta)
		\notin Z_{\frac{p}{q}}$, so $P$ lies in the interior of
		$[0,1]-(\xi_{max}^{\alpha} \cup \xi_{min}^{\beta} )$. Thus, there exits
		$\kappa_1, \kappa_2 \geq 0$ such that one of the flat spots
		$\xi_{max}^{\alpha - \kappa_1}$ and $\xi_{min}^{\beta - \kappa_2}$ hits
		a point $P$ while the other flat spot stays outside all points of $P$.
		Thus, $ (\alpha - \kappa_1, \beta + \kappa_2) \in Z_{\frac{p}{q}}$. 	
	\end{proof}
	
	\begin{lemma}\label{leading:order:rational}
		If $\frac{p}{q} , \frac{r}{s} \in \mathbb{Q} $, $
		g.c.d(p,q)=g.c.d(r,s)=1$  and $\frac{p}{q} < \frac{r}{s}$, then
		$d(\mathcal{F},Z_{\frac{p}{q}}) < d(\mathcal{F},Z_{\frac{r}{s}}) $. 	
	\end{lemma}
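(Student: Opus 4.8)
The plan is to exploit the radial monotonicity of $\psi$ about the focal point $\mathcal{F}$ provided by Lemma \ref{distance:pairs}, together with the staircase structure of the leading sets from Theorem \ref{L:structure:rational}. First I would record what Theorem \ref{L:structure:rational} gives us geometrically: for coprime $\frac{p}{q}$ (and likewise $\frac{r}{s}$) the leading set $Z_{\frac{p}{q}}$ is a compact arc in $\mathcal{P}$ running from a point of the bottom edge $\{\beta=0\}$ to a point of the right edge $\{\alpha=1\}$, and, being a monotone staircase while $\mathcal{F}=(1,0)$ is the bottom–right corner of $\mathcal{P}$, it is met exactly once by every ray issuing from $\mathcal{F}$ into $\mathcal{P}$. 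Consequently $Z_{\frac{p}{q}}$ together with the two segments of $\partial\mathcal{P}$ joining its endpoints to $\mathcal{F}$ bounds a closed region $D_{\frac{p}{q}}\subset\mathcal{P}$ containing $\mathcal{F}$, and a point $x\in\mathcal{P}$ lies in $D_{\frac{p}{q}}$ exactly when, on the ray from $\mathcal{F}$ through $x$, the point $x$ is no farther from $\mathcal{F}$ than the unique point where that ray meets $Z_{\frac{p}{q}}$.

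The second step is to show that $\psi\le\frac{p}{q}$ on all of $D_{\frac{p}{q}}$. Given $x\in D_{\frac{p}{q}}$, let $z$ be the point where $Z_{\frac{p}{q}}$ meets the ray from $\mathcal{F}$ through $x$. Then $x$ and $z$ are co-linear with $\mathcal{F}$ and $d(\mathcal{F},x)\le d(\mathcal{F},z)$, so Lemma \ref{distance:pairs} yields $\psi(x)\le\psi(z)=\frac{p}{q}$, the last equality by Definition \ref{leading:set:rational}. Since $\frac{p}{q}<\frac{r}{s}$, it follows that $Z_{\frac{r}{s}}$, on which $\psi\equiv\frac{r}{s}$, is disjoint from $D_{\frac{p}{q}}$; in particular $Z_{\frac{r}{s}}\subset\mathcal{P}\setminus D_{\frac{p}{q}}$.

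Finally, by compactness of the staircase $Z_{\frac{r}{s}}$ choose $w_0\in Z_{\frac{r}{s}}$ with $d(\mathcal{F},w_0)=d(\mathcal{F},Z_{\frac{r}{s}})$. The segment $[\mathcal{F},w_0]$ starts in $D_{\frac{p}{q}}$ and ends outside it, and near $\mathcal{F}$ it lies in $D_{\frac{p}{q}}$ (since $D_{\frac{p}{q}}$ coincides with $\mathcal{P}$ near the corner $\mathcal{F}$), so it meets $\partial D_{\frac{p}{q}}$ at a first point $z_0$ lying strictly between $\mathcal{F}$ and $w_0$. Moreover $z_0\in Z_{\frac{p}{q}}$: the only alternative is that $z_0$ lies on one of the two edge-segments of $\partial D_{\frac{p}{q}}$, which forces $[\mathcal{F},w_0]$ to lie along that edge of $\mathcal{P}$, whence $z_0$ must be the endpoint of $Z_{\frac{p}{q}}$ on that edge, again a point of $Z_{\frac{p}{q}}$. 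Since $z_0$ is strictly between $\mathcal{F}$ and $w_0$ we obtain
$d(\mathcal{F},Z_{\frac{p}{q}})\le d(\mathcal{F},z_0)<d(\mathcal{F},w_0)=d(\mathcal{F},Z_{\frac{r}{s}})$, as desired.

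The step I expect to be the main obstacle is the geometric bookkeeping of the first paragraph: verifying rigorously, from Theorem \ref{L:structure:rational} and the location of $\mathcal{F}=(1,0)$ at the bottom–right corner of $\mathcal{P}$, that the monotone staircase $Z_{\frac{p}{q}}$ is crossed exactly once by each ray from $\mathcal{F}$ and that it genuinely bounds, together with $\mathcal{F}$, a well-defined region $D_{\frac{p}{q}}$ with the stated radial description. This is intuitively evident, but a clean proof requires a careful (if routine) case analysis of how the treads and rises of the staircase sit relative to rays emanating from that corner; once it is in place, everything else follows from Lemma \ref{distance:pairs} and compactness.
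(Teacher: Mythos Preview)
Your argument is correct and rests on the same two ingredients as the paper's proof: the disjointness of $Z_{\frac{p}{q}}$ and $Z_{\frac{r}{s}}$ (since $\psi$ takes distinct constant values on them) together with the radial monotonicity of $\psi$ along rays from $\mathcal{F}$ given by Lemma~\ref{distance:pairs}. The paper proceeds by contradiction---assuming the reverse distance inequality, choosing points of the two staircases co-linear with $\mathcal{F}$, and invoking Lemma~\ref{distance:pairs} to get $\frac{p}{q}\geqslant\frac{r}{s}$---whereas you argue directly by building the region $D_{\frac{p}{q}}$ and showing the segment from $\mathcal{F}$ to the nearest point of $Z_{\frac{r}{s}}$ must pierce $Z_{\frac{p}{q}}$ first. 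Your route is more explicit about the geometry (and in fact fills in a step the paper leaves tacit, namely why the set-distance inequality transfers to individual co-linear points), at the cost of the extra bookkeeping you flag; the paper's version is terser but relies on the reader supplying exactly that bookkeeping.
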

	
	\begin{proof}
		We first note that $\frac{p}{q} \neq \frac{r}{s}$, so  $
		Z_{\frac{p}{q}} \cap Z_{\frac{r}{s}} \neq 0$, since
		otherwise $\psi$ would become multi-valued at the point of
		intersection. So, either $d(\mathcal{F},Z_{\frac{p}{q}}) <
		d(\mathcal{F},Z_{\frac{r}{s}}) $  or  $d(\mathcal{F},Z_{\frac{p}{q}}) >
		d(\mathcal{F},Z_{\frac{r}{s}}) $. Suppose if possible
		$d(\mathcal{F},Z_{\frac{p}{q}}) > d(\mathcal{F},Z_{\frac{r}{s}}) $.
		Choose a point $(\alpha, \beta) \in Z_{\frac{p}{q}}$ and $(\gamma,
		\delta) \in Z_{\frac{r}{s}}$ such that the points $(\alpha, \beta)$, $(\gamma,
		\delta)$ and $\mathcal{F}$ are \emph{co-linear}. Then , $d(\mathcal{F},(\alpha,\beta) >
		d(\mathcal{F},(\gamma,\delta)) $. By Lemma \ref{distance:pairs}
		$H_{\alpha,\beta}$ \emph{forces} $H_{\gamma,\delta}$. Hence,
		$\psi((\alpha,\beta)) = \frac{p}{q} \geqslant \psi((\gamma, \delta))=
		\frac{r}{s}$ which is a contradiction.
	\end{proof}
	
	To show that the $\frac{p}{q}$-\emph{Iso-over-rotation-tract} $\mathcal{I}_{\frac{p}{q}}$ is connected, we define two more sets:
	
	\begin{definition}
		Let $\mathcal{G}_{\frac{p}{q}}$ be the set of all points $(\alpha,\beta)$ in the parameter plane $\mathcal{P}$ such that
		$ \psi((\alpha,\beta))  \geqslant \frac{p}{q}$.
	\end{definition}

	\begin{definition}
		Let $\mathcal{U}_{\frac{p}{q}}$ be the set of all points $(\alpha,\beta)$ in the parameter plane $\mathcal{P}$ such that
		$
		\psi((\alpha,\beta))  < \frac{p}{q}$.
	\end{definition}

	\begin{theorem}\label{loc}
		$\mathcal{U}_{\frac{p}{q}}$ is precisely the component of
		$\mathcal{P} - Z_{\frac{p}{q}}$ containing the focal point
		$\mathcal{F}$.
	\end{theorem}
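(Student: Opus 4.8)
The plan is to prove the two inclusions $\mathcal{U}_{\frac{p}{q}}\subseteq\mathcal{C}$ and $\mathcal{C}\subseteq\mathcal{U}_{\frac{p}{q}}$, where $\mathcal{C}$ denotes the component of $\mathcal{P}-Z_{\frac{p}{q}}$ that contains $\mathcal{F}$. The first inclusion is the soft one. Since $\psi\equiv\frac{p}{q}$ on $Z_{\frac{p}{q}}$ while $\psi<\frac{p}{q}$ on $\mathcal{U}_{\frac{p}{q}}$, the two sets are disjoint, so $\mathcal{U}_{\frac{p}{q}}\subseteq\mathcal{P}-Z_{\frac{p}{q}}$. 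Now fix $(\alpha,\beta)\in\mathcal{U}_{\frac{p}{q}}$ and consider the straight segment $\sigma$ from $(\alpha,\beta)$ to $\mathcal{F}=(1,0)$; every point of $\sigma$ is co‑linear with $\mathcal{F}$, and as one moves along $\sigma$ toward $\mathcal{F}$ the distance to $\mathcal{F}$ decreases, so by Lemma \ref{distance:pairs} the function $\psi$ is non‑increasing along $\sigma$ in that direction. Hence $\psi<\frac{p}{q}$ on all of $\sigma$, i.e. $\sigma\subseteq\mathcal{U}_{\frac{p}{q}}$. Thus $\mathcal{U}_{\frac{p}{q}}$ is star‑shaped with respect to $\mathcal{F}$, in particular path‑connected, so it lies inside the component $\mathcal{C}$.

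For the reverse inclusion I would first make the topology of $\mathcal{P}-Z_{\frac{p}{q}}$ explicit from Theorem \ref{L:structure:rational}. Removing a coordinatewise monotone staircase whose initial tread and final rise lie in the side arm $\mathcal{S}$ cuts the rectangle $\mathcal{P}$ into exactly two path‑connected pieces, and the piece $\mathcal{C}$ containing $\mathcal{F}$ is precisely the set of $(\alpha,\beta)\in\mathcal{P}-Z_{\frac{p}{q}}$ lying \emph{below} the staircase, namely those for which there is some $(\alpha',\beta')\in Z_{\frac{p}{q}}$ with $\alpha'\le\alpha$ and $\beta'\ge\beta$. (Both directions of this description are checked directly: such a point is joined to $\mathcal{F}$ by the broken path that runs horizontally to the right edge $\{\alpha=1\}$ and then vertically down to $\mathcal{F}$, and one verifies that this path misses $Z_{\frac{p}{q}}$ using monotonicity of the staircase and $(\alpha,\beta)\notin Z_{\frac{p}{q}}$; conversely, moving straight up from any point below the staircase meets $Z_{\frac{p}{q}}$.) Given $(\alpha,\beta)\in\mathcal{C}$, pick a dominating $(\alpha',\beta')\in Z_{\frac{p}{q}}$ as above. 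Then $\xi^{max}_{\alpha}\subseteq\xi^{max}_{\alpha'}$ and $\xi^{min}_{\beta}\subseteq\xi^{min}_{\beta'}$, so, exactly as in the proof of Lemma \ref{distance:pairs}, $I_{\alpha',\beta'}\subseteq I_{\alpha,\beta}$ and therefore $\psi(\alpha,\beta)\le\psi(\alpha',\beta')=\frac{p}{q}$.

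It remains to exclude $\psi(\alpha,\beta)=\frac{p}{q}$ for $(\alpha,\beta)\in\mathcal{C}$, and I expect this to be the main obstacle. The tool is Lemma \ref{kappa}: if $\psi(\alpha,\beta)=\frac{p}{q}$ and $(\alpha,\beta)\notin Z_{\frac{p}{q}}$, one can enlarge a single flat spot — decreasing $\alpha$ or increasing $\beta$ — until it first meets an over‑twist orbit of over‑rotation number $\frac{p}{q}$, reaching a point of $Z_{\frac{p}{q}}$, and along the way the over‑twist orbit realizing $\frac{p}{q}$ persists while no orbit of smaller over‑rotation number appears, so all intermediate parameters still satisfy $\psi=\frac{p}{q}$ and still avoid $Z_{\frac{p}{q}}$. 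Thus $(\alpha,\beta)$ is joined inside $\mathcal{C}\cap\mathcal{I}_{\frac{p}{q}}$ to parameters arbitrarily close to a point of $Z_{\frac{p}{q}}$. What one wants to conclude is that the iso‑over‑rotation tract does not spill over the leading staircase toward $\mathcal{F}$, i.e. $\mathcal{I}_{\frac{p}{q}}=Z_{\frac{p}{q}}$; equivalently, that $\psi$ \emph{strictly} decreases as one leaves $Z_{\frac{p}{q}}$ in the direction of $\mathcal{F}$. I would attempt this by combining Lemma \ref{kappa} with the ordering $d(\mathcal{F},Z_{\frac{r}{s}})<d(\mathcal{F},Z_{\frac{p}{q}})$ of Lemma \ref{leading:order:rational} for rationals $\frac{r}{s}\uparrow\frac{p}{q}$ together with a limiting argument showing that the leading staircases $Z_{\frac{r}{s}}$ accumulate onto $Z_{\frac{p}{q}}$, which pins $\{\psi=\frac{p}{q}\}$ to the staircase itself. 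Once that is in hand, $\psi(\alpha,\beta)<\frac{p}{q}$ on $\mathcal{C}$, i.e. $\mathcal{C}\subseteq\mathcal{U}_{\frac{p}{q}}$, and everything else in the argument is formal.
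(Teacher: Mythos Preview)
Your handling of the inclusion $\mathcal{U}_{\frac{p}{q}}\subseteq\mathcal{C}$ and of the weak inequality $\psi\le\frac{p}{q}$ on $\mathcal{C}$ is fine and is essentially the paper's argument, only stated more carefully: you make explicit the star–shapedness of $\mathcal{U}_{\frac{p}{q}}$ with respect to $\mathcal{F}$ via Lemma~\ref{distance:pairs}, and you use the monotone staircase description of Theorem~\ref{L:structure:rational} to produce, for each $(\alpha,\beta)\in\mathcal{C}$, a dominating point $(\alpha',\beta')\in Z_{\frac{p}{q}}$ with $\alpha'\le\alpha$, $\beta'\ge\beta$. The paper argues the same two facts through distance comparisons to $\mathcal{F}$.

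The gap is in the last step. You correctly single out Lemma~\ref{kappa} as the tool for ruling out $\psi(\alpha,\beta)=\frac{p}{q}$ on $\mathcal{C}$, and you correctly describe what it delivers: a point $(\alpha-\kappa_1,\beta+\kappa_2)\in Z_{\frac{p}{q}}$ northwest of $(\alpha,\beta)$. But instead of extracting a contradiction from this, you switch to an unproved limiting claim that the staircases $Z_{\frac{r}{s}}$ for $\frac{r}{s}\uparrow\frac{p}{q}$ accumulate on $Z_{\frac{p}{q}}$, and you never carry that out. Your intermediate reformulation ``$\mathcal{I}_{\frac{p}{q}}=Z_{\frac{p}{q}}$'' is also off target: what Theorem~\ref{loc} needs is only $\mathcal{I}_{\frac{p}{q}}\cap\mathcal{C}=\emptyset$; whether the iso--tract extends past $Z_{\frac{p}{q}}$ on the $\mathcal{V}$--side is a separate (and stronger) question, and the paper does not address it here. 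The paper's own proof stays entirely local: it applies Lemma~\ref{kappa} and reads off a contradiction directly from the relative position of $(\alpha,\beta)$, $(\alpha-\kappa_1,\beta+\kappa_2)$ and $\mathcal{F}$ with respect to the staircase --- no accumulation of other leading sets, no continuity of $\psi$, no Lemma~\ref{leading:order:rational}. You had the right lemma in hand; the missing move is to combine its output with your own coordinatewise characterization of $\mathcal{C}$ to get the contradiction on the spot, rather than detouring through an asymptotic argument.
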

	
	\begin{proof}
		Let $(x,y)$ be any point in the component of
		$\mathcal{P} - Z_{\frac{p}{q}}$  containing the focal point
		$\mathcal{F}$. Then clearly, $d((x,y), \mathcal{F}) <
		d(Z_{\frac{p}{q}}, \mathcal{F})$. So, for every $(s,t) \in
		Z_{\frac{p}{q}}$, we have $d((s,t), \mathcal{F}) > d((x,y),
		\mathcal{F})$. Hence, $\psi((x,y)) \leqslant \frac{p}{q}$. If
		$\psi((x,y)) = \frac{p}{q}$ and $(x,y) \notin Z_{\frac{p}{q}}$, then by
		Lemma \ref{kappa}, there exists $\kappa_1, \kappa_2 \geq 0$ such that $
		(x - \kappa_1, y + \kappa_2) \in Z_{\frac{p}{q}}$. Now, clearly, $ d((x
		- \kappa_1, y + \kappa_2), \mathcal{F}) < d((x,y), \mathcal{F}) $ which
		is a contradiction. Thus,  $\psi((x,y)) < \frac{p}{q}$.
		
		Now, suppose that for some $(x,y) \in \mathcal{P}$ we have $\psi((x,y))
		< \frac{p}{q}$. We claim that $d((x,y), \mathcal{F}) <
		d(Z_{\frac{p}{q}}$. By way of contradiction, suppose $d((x,y),
		\mathcal{F}) < d(Z_{\frac{p}{q}}, \mathcal{F})$. Then, there exists
		$(w,z) \in Z_{\frac{p}{q}}$ such that $d((x,y), \mathcal{F}) > d((w,z),
		\mathcal{F})$. But, $\psi((x,y)) < \frac{p}{q} = \psi((w,z))$ which
		contradicts Lemma \ref{distance:pairs}. This proves the theorem.
	\end{proof}
	
	\begin{cor}\label{lower:limit}
		$\mathcal{U}_{\frac{p}{q}}$ is path-connected.
	\end{cor}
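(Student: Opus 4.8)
The plan is to show that $\mathcal{U}_{\frac{p}{q}}$ is \emph{star-shaped} with respect to the focal point $\mathcal{F}=(1,0)$; path-connectedness is then immediate. First I would record that $\mathcal{F}\in\mathcal{U}_{\frac{p}{q}}$, since $\psi(\mathcal{F})=0<\frac{p}{q}$ (recall $\psi$ attains its least value $0$ at $\mathcal{F}$). Next, fix an arbitrary point $(\alpha,\beta)\in\mathcal{U}_{\frac{p}{q}}$, so that $\psi((\alpha,\beta))<\frac{p}{q}$, and consider the straight segment $\ell$ joining $(\alpha,\beta)$ to $\mathcal{F}$. Because $\mathcal{P}$ is a rectangle (hence convex) and $\mathcal{F}$ is one of its vertices, $\ell\subset\mathcal{P}$. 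Every point $(\gamma,\delta)\in\ell$ is co-linear with $(\alpha,\beta)$ and $\mathcal{F}$ and satisfies $d(\mathcal{F},(\gamma,\delta))\leqslant d(\mathcal{F},(\alpha,\beta))$, so Lemma \ref{distance:pairs} gives $\psi((\gamma,\delta))\leqslant\psi((\alpha,\beta))<\frac{p}{q}$; hence $\ell\subset\mathcal{U}_{\frac{p}{q}}$.

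This shows that $\mathcal{U}_{\frac{p}{q}}$ contains, together with any of its points, the entire segment from that point to $\mathcal{F}$, i.e. it is star-shaped with center $\mathcal{F}$. Given $u,v\in\mathcal{U}_{\frac{p}{q}}$, the concatenation of the segment from $u$ to $\mathcal{F}$ with the segment from $\mathcal{F}$ to $v$ is a path inside $\mathcal{U}_{\frac{p}{q}}$ joining $u$ to $v$, so $\mathcal{U}_{\frac{p}{q}}$ is path-connected (indeed contractible).

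Alternatively one could invoke Theorem \ref{loc}: since $Z_{\frac{p}{q}}$ is closed, $\mathcal{P}\setminus Z_{\frac{p}{q}}$ is an open subset of $\mathbb{R}^2$, hence locally path-connected, and therefore each of its connected components — in particular $\mathcal{U}_{\frac{p}{q}}$ — is path-connected. I would nevertheless prefer the star-shaped argument above, since it is self-contained, uses only Lemma \ref{distance:pairs}, and makes the geometry of $\mathcal{U}_{\frac{p}{q}}$ explicit.

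The argument is routine and I do not anticipate a genuine obstacle; the only point that requires a moment's care is the claim that the segment from an arbitrary point of $\mathcal{P}$ to $\mathcal{F}$ never leaves $\mathcal{P}$, which is precisely why $\mathcal{F}$ was taken to be a vertex of the rectangle $\mathcal{P}$ (the same conclusion would fail for a generic interior point as center, since $\psi$ need not be monotone along segments that are not co-linear with $\mathcal{F}$).
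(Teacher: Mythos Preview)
Your argument is correct, but it follows a genuinely different route from the paper. The paper's proof invokes Theorem~\ref{loc} to identify $\mathcal{U}_{\frac{p}{q}}$ with the component of $\mathcal{P}\setminus Z_{\frac{p}{q}}$ containing $\mathcal{F}$, and then appeals to Theorem~\ref{L:structure:rational} (the staircase description of $Z_{\frac{p}{q}}$) to conclude that this component is path-connected. Your proof bypasses both of those results entirely: using only Lemma~\ref{distance:pairs} and the convexity of the rectangle $\mathcal{P}$, you show directly that $\mathcal{U}_{\frac{p}{q}}$ is star-shaped about $\mathcal{F}$. This is more elementary and self-contained, and it yields a strictly stronger conclusion (contractibility rather than mere path-connectedness). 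The paper's approach, by contrast, is more structural: it ties the topology of $\mathcal{U}_{\frac{p}{q}}$ to the explicit combinatorial description of the leading set, which is information needed elsewhere in the paper anyway. Your alternative paragraph (open subset of $\mathbb{R}^2$, locally path-connected, components path-connected) is essentially a cleaned-up version of what the paper intends, and is in fact more rigorous than the paper's own one-line justification, which tacitly relies on a Jordan-arc separation argument it does not spell out.
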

	
	\begin{proof}
		From Theorem \ref{loc} it follows that $\mathcal{U}_{\frac{p}{q}}$ is
		precisely the component of $\mathcal{P} - Z_{\frac{p}{q}}$ containing
		the \emph{focal point} $\mathcal{F}$. Since, $\mathcal{P}$ is
		path-connected and from Theorem \ref{L:structure:rational} it follows
		that $Z_{\frac{p}{q}}$ is path-connected, so
		$\mathcal{U}_{\frac{p}{q}}$ is path-connected. 	
	\end{proof}	
	
	\begin{lemma}\label{upper:limit}
		$\mathcal{G}_{\frac{p}{q}}$ is path-connected. 	
	\end{lemma}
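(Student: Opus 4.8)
The plan is to collapse $\mathcal{G}_{\frac{p}{q}}$ radially, moving each of its points straight away from the focal point $\mathcal{F}$ until it lands in the base set $\mathcal{B}$ (along which $\psi$ takes its maximal value $\tfrac12$), and then invoke the evident path-connectedness of $\mathcal{B}$. The only prior input needed is Lemma~\ref{distance:pairs} together with the explicit description of the rectangle $\mathcal{P}$; this argument is the counterpart, for the ``large'' side, of Corollary~\ref{lower:limit}.

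First I would record that, since $0<\frac{p}{q}\le\frac12$, we have $\psi\equiv\frac12\ge\frac{p}{q}$ on $\mathcal{B}$, so $\mathcal{B}\subseteq\mathcal{G}_{\frac{p}{q}}$; and $\mathcal{B}$ is path-connected, being the union of the two edges of $\mathcal{P}$ that meet at the vertex $\mathcal{V}$.

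Next, fix $(\alpha,\beta)\in\mathcal{G}_{\frac{p}{q}}$, so $\psi((\alpha,\beta))\ge\frac{p}{q}>0$ and hence $(\alpha,\beta)\ne\mathcal{F}$. Consider the ray from $\mathcal{F}$ through $(\alpha,\beta)$, continued past $(\alpha,\beta)$ in the direction away from $\mathcal{F}$. Because $\mathcal{P}$ is a convex rectangle having $\mathcal{F}$ as a vertex, with the two edges adjacent to $\mathcal{F}$ forming precisely the side arm $\mathcal{S}$, this ray stays inside $\mathcal{P}$ until it exits $\mathcal{P}$ at a point $w(\alpha,\beta)$ lying on $\mathcal{B}$ (the pair of edges of $\partial\mathcal{P}$ not through $\mathcal{F}$); by convexity the straight segment $\sigma_{(\alpha,\beta)}$ from $(\alpha,\beta)$ to $w(\alpha,\beta)$ is contained in $\mathcal{P}$. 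Every point $(\gamma,\delta)\in\sigma_{(\alpha,\beta)}$ is co-linear with $\mathcal{F}$ and $(\alpha,\beta)$ and satisfies $d(\mathcal{F},(\gamma,\delta))\ge d(\mathcal{F},(\alpha,\beta))$, so Lemma~\ref{distance:pairs}, applied with $(\gamma,\delta)$ playing the role of the point farther from $\mathcal{F}$, gives $\psi((\gamma,\delta))\ge\psi((\alpha,\beta))\ge\frac{p}{q}$. Hence $\sigma_{(\alpha,\beta)}\subseteq\mathcal{G}_{\frac{p}{q}}$, and in particular $(\alpha,\beta)$ is joined inside $\mathcal{G}_{\frac{p}{q}}$ to the point $w(\alpha,\beta)\in\mathcal{B}$.

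Finally, for any $u,v\in\mathcal{G}_{\frac{p}{q}}$ I would concatenate $\sigma_u$ (from $u$ to $w(u)$), a path inside $\mathcal{B}$ from $w(u)$ to $w(v)$, and $\sigma_v$ traversed backwards (from $w(v)$ to $v$); all three legs lie in $\mathcal{G}_{\frac{p}{q}}$, so this is a path in $\mathcal{G}_{\frac{p}{q}}$ joining $u$ and $v$, proving $\mathcal{G}_{\frac{p}{q}}$ path-connected. (One may note in passing that $H((\alpha,\beta),t)=(1-t)(\alpha,\beta)+t\,w(\alpha,\beta)$ actually exhibits $\mathcal{B}$ as a deformation retract of $\mathcal{G}_{\frac{p}{q}}$, using that $\mathcal{G}_{\frac{p}{q}}$ stays a positive distance from $\mathcal{F}$ so that $w$ is continuous, but this is not needed here.) There is no serious obstacle; the one point to handle carefully is the elementary planar geometry of the third paragraph — that a ray issued from the vertex $\mathcal{F}$ leaves $\mathcal{P}$ exactly through $\mathcal{B}$ and not before — together with matching the orientation convention of Lemma~\ref{distance:pairs}.
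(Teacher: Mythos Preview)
Your argument is correct, and it takes a genuinely different route from the paper's. The paper derives the lemma from the staircase structure of $Z_{\frac{p}{q}}$: by Theorem~\ref{loc}, $\mathcal{U}_{\frac{p}{q}}$ is one of the two components into which the staircase $Z_{\frac{p}{q}}$ (Theorem~\ref{L:structure:rational}) separates the rectangle $\mathcal{P}$, so $\mathcal{G}_{\frac{p}{q}}=\mathcal{P}\setminus\mathcal{U}_{\frac{p}{q}}$ is the other component together with the staircase, hence path-connected. Your proof bypasses $Z_{\frac{p}{q}}$ entirely: using only Lemma~\ref{distance:pairs} and the convex geometry of $\mathcal{P}$, you slide each point of $\mathcal{G}_{\frac{p}{q}}$ outward along the ray from $\mathcal{F}$ to the base $\mathcal{B}$, on which $\psi\equiv\tfrac12$, and then use that $\mathcal{B}$ is path-connected. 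The advantage of your approach is that it is more elementary and self-contained (it does not need the classification of bimodal over-twist patterns behind Theorem~\ref{L:structure:rational}, nor the Jordan-type separation implicit in the paper's one-line proof); the paper's approach, on the other hand, fits naturally into the surrounding narrative, since the staircase $Z_{\frac{p}{q}}$ is already established and is used again later. Your parenthetical observation that this actually exhibits $\mathcal{B}$ as a deformation retract of $\mathcal{G}_{\frac{p}{q}}$ is a nice bonus and mirrors, on the opposite side, the deformation-retract argument the paper uses in Theorem~\ref{greater:deformation}.
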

	
	\begin{proof}
		Clearly, $\mathcal{G}_{\frac{p}{q}}$ = $\mathcal{P} -
		\mathcal{U}_{\frac{p}{q}}$ and from Corollary \ref{lower:limit}
		$\mathcal{G}_{\frac{p}{q}}$ is path-connected. Hence, the result
		follows.
	\end{proof}	
	
	From Theorem \ref{deformation:connected}, it follows that to show that
	the \emph{Iso-over-rotation-tract} $\mathcal{I}_{\frac{p}{q}}$ is
	connected, it is sufficient to show that $\mathcal{I}_{\frac{p}{q}}$ is
	a deformation retract of $\mathcal{G}_{\frac{p}{q}}$. This is exactly
	what we will show next.
	
	\begin{theorem}\label{greater:deformation}
		For a given rational number $\frac{p}{q}$  where $p$ and $q$ are
		coprime, $\mathcal{I}_{\frac{p}{q}}$ is a deformation retract of
		$\mathcal{G}_{\frac{p}{q}}$.	
	\end{theorem}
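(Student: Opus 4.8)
The plan is to realize $\mathcal{G}_{\frac{p}{q}}$ as the union of the straight segments issuing from the focal point $\mathcal{F}=(1,0)$ and to build the deformation retraction segment by segment: on each such segment one keeps the part lying in $\mathcal{I}_{\frac{p}{q}}$ fixed and slides the remaining outer part radially inward, towards $\mathcal{F}$, until it is collapsed onto the outer endpoint of $\mathcal{I}_{\frac{p}{q}}$ on that segment.

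First I would record the monotonicity on which everything rests. Exactly as in the proof of Lemma~\ref{distance:pairs}, if $\alpha_1\le\alpha_2$ and $\beta_1\ge\beta_2$ then $\xi^{max}_{\alpha_2}\subseteq\xi^{max}_{\alpha_1}$ and $\xi^{min}_{\beta_2}\subseteq\xi^{min}_{\beta_1}$, whence $I_{\alpha_1,\beta_1}\subseteq I_{\alpha_2,\beta_2}$ and $\psi(\alpha_1,\beta_1)\ge\psi(\alpha_2,\beta_2)$. In particular, along any ray $\ell$ from $\mathcal{F}$, parametrized by the distance $s\in[0,L_\ell]$ from $\mathcal{F}$ with $p_\ell(s)$ the point at distance $s$, the function $\phi_\ell(s):=\psi(p_\ell(s))$ is non-decreasing; it equals $0$ at $\mathcal{F}$ and equals $\tfrac12$ once $\ell$ reaches the base set $\mathcal{B}$, which every ray from $\mathcal{F}$ does. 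Since $\mathcal{F}\notin\mathcal{G}_{\frac{p}{q}}$ (as $\psi(\mathcal{F})=0$), and since a ray leaving the $\mathcal{F}$-component $\mathcal{U}_{\frac{p}{q}}$ of $\mathcal{P}-Z_{\frac{p}{q}}$ must cross $Z_{\frac{p}{q}}$, on which $\psi\equiv\frac{p}{q}$ (Theorem~\ref{loc} and Definition~\ref{leading:set:rational}), monotonicity of $\phi_\ell$ gives numbers $a_\ell\le c_\ell\le L_\ell$, with $p_\ell(a_\ell)\in Z_{\frac{p}{q}}$ and $c_\ell=\sup\{\,s:\phi_\ell(s)=\tfrac{p}{q}\,\}$, for which
\[
\mathcal{G}_{\frac{p}{q}}\cap\ell=\{\,p_\ell(s):a_\ell\le s\le L_\ell\,\},\qquad
\mathcal{I}_{\frac{p}{q}}\cap\ell=\{\,p_\ell(s):a_\ell\le s\le c_\ell\,\}.
\]
Thus $\mathcal{G}_{\frac{p}{q}}=\mathcal{I}_{\frac{p}{q}}\cup\{\psi>\tfrac{p}{q}\}$, each ray meets $\mathcal{I}_{\frac{p}{q}}$ in exactly one segment, and the outer endpoints $p_\ell(c_\ell)$ form a set $Z'_{\frac{p}{q}}$, the outer boundary of $\mathcal{I}_{\frac{p}{q}}$.

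Next I would define $H:\mathcal{G}_{\frac{p}{q}}\times[0,1]\to\mathcal{G}_{\frac{p}{q}}$ by
\[
H(p_\ell(s),t)=p_\ell\bigl((1-t)s+t\min(s,c_\ell)\bigr).
\]
If $p_\ell(s)\in\mathcal{I}_{\frac{p}{q}}$, i.e.\ $s\le c_\ell$, then $H(p_\ell(s),t)=p_\ell(s)$ for all $t$; hence $H(\cdot,0)=\mathrm{id}$ and $H$ fixes $\mathcal{I}_{\frac{p}{q}}$ pointwise at every time, while $H(\cdot,1)$ carries each $p_\ell(s)$ to $p_\ell(\min(s,c_\ell))\in\mathcal{I}_{\frac{p}{q}}$ (using $p_\ell(c_\ell)\in\mathcal{I}_{\frac{p}{q}}$, established below). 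Since $(1-t)s+t\min(s,c_\ell)$ always lies between $a_\ell$ and $s$ and $a_\ell\le c_\ell$, the map $H$ takes values in $\mathcal{G}_{\frac{p}{q}}$. Hence, granted that $H$ is continuous, $H$ exhibits $\mathcal{I}_{\frac{p}{q}}$ as a deformation retract of $\mathcal{G}_{\frac{p}{q}}$, which is the assertion.

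So the whole argument reduces to continuity of $H$, which in turn reduces to continuity of the single function $\ell\mapsto c_\ell$: writing $x\in\mathcal{P}\setminus\{\mathcal{F}\}$ in polar coordinates $(\theta(x),s(x))$ about $\mathcal{F}$ one has
\[
H(x,t)=\mathcal{F}+\bigl((1-t)s(x)+t\min(s(x),c_{\theta(x)})\bigr)\,u(\theta(x)),
\]
with $u(\theta)$ the unit vector in direction $\theta$, and $\theta,s,u,\min$ are all continuous. (That $c_\ell$ is well defined is contained in the monotonicity above; that $\ell\mapsto a_\ell$ is well defined and continuous follows from $\mathcal{U}_{\frac{p}{q}}$ being star-shaped about $\mathcal{F}$ with boundary the staircase $Z_{\frac{p}{q}}$ of Theorem~\ref{L:structure:rational}, though for $H$ itself only the existence of $a_\ell$ is used.) I expect the genuine obstacle of the proof to be precisely the continuity of $\ell\mapsto c_\ell$, equivalently: the outer boundary $Z'_{\frac{p}{q}}$ of $\mathcal{I}_{\frac{p}{q}}$ is a continuous arc, contained in $\mathcal{I}_{\frac{p}{q}}$, meeting each ray from $\mathcal{F}$ exactly once. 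To prove this I would rerun, in the outward direction, the ``waggling'' analysis of Theorem~\ref{L:structure:rational}: once a parameter has crossed $Z_{\frac{p}{q}}$ one has $\psi(\alpha,\beta)=\frac{p}{q}$ exactly as long as at least one of the finitely many over-twist orbits $P_{0,\frac{p}{q}},\dots,P_{q-2p,\frac{p}{q}}$ still survives in $H_{\alpha,\beta}$ (by Theorem~\ref{overrotation:pairs} survival of one of them forces $\psi\le\frac{p}{q}$, while monotonicity forces $\psi\ge\frac{p}{q}$), and, as the flat spots $\xi^{max}_\alpha,\xi^{min}_\beta$ are enlarged, these orbits are destroyed one at a time, in the order dictated by the \emph{floats above} relation of Lemmas~\ref{floating:above:1}, \ref{floating:above:2} and \ref{floating:above:3}. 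The point $p_\ell(c_\ell)$ is precisely where the last survivor disappears; since a flat spot merely touching an orbit point leaves that orbit intact, $\psi=\frac{p}{q}$ at $p_\ell(c_\ell)$, so $p_\ell(c_\ell)\in\mathcal{I}_{\frac{p}{q}}$, and bookkeeping these successive disappearances shows that $Z'_{\frac{p}{q}}$ is again a continuous staircase-type arc, contained in $\mathcal{I}_{\frac{p}{q}}$ and crossing each ray from $\mathcal{F}$ exactly once (a symmetric staircase of the same combinatorial type as $Z_{\frac{p}{q}}$, merely displaced towards $\mathcal{B}$). With this in hand $H$ is continuous, and the theorem follows.
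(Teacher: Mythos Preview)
Your construction is exactly the paper's. The paper parametrizes the segment from $(x,y)\in\mathcal{G}_{\frac{p}{q}}$ to the focal point by $\Phi_{(x,y)}(t)=((1-t)x+t,(1-t)y)$, sets $\tau_{\frac{p}{q}}=\inf\{t:\psi(\Phi_{(x,y)}(t))=\tfrac{p}{q}\}$, and takes the homotopy $\mathcal{D}_{\frac{p}{q}}((x,y),t)=\Phi_{(x,y)}(\min(t,\tau_{\frac{p}{q}}))$. Up to reparametrization this is your $H$, with $\Phi_{(x,y)}(\tau_{\frac{p}{q}})$ playing the role of your $p_\ell(c_\ell)$.

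The only difference is one of rigor, not of strategy. The paper ends with the single sentence ``$\mathcal{D}_{\frac{p}{q}}$ is clearly continuous'' and proves nothing further, whereas you correctly isolate continuity of $\ell\mapsto c_\ell$ (equivalently of $(x,y)\mapsto\tau_{\frac{p}{q}}$) as the real content and sketch an outer-boundary staircase analysis, mirroring Theorem~\ref{L:structure:rational}, to establish it. So your write-up is, if anything, more complete than the paper's own proof on precisely the point that needs care. One caution about your sketch: the implication ``$\psi(\alpha,\beta)=\tfrac{p}{q}\Rightarrow$ one of the specific $H_2$-orbits $P_{r,\frac{p}{q}}$ survives in $H_{\alpha,\beta}$'' is not immediate from Theorem~\ref{t:sharp}, since the over-twist cycle that theorem produces could in principle pass through a flat spot rather than coincide with one of the distinguished $P_{r,\frac{p}{q}}$. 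You would need a short extra argument here (or, alternatively, use the coordinate-wise monotonicity of $\psi$ in $\alpha$ and in $\beta$ directly to see that the closed set $\{\psi\le\tfrac{p}{q}\}$ has a monotone staircase boundary meeting each ray from $\mathcal{F}$ once). The paper does not address this point at all.
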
	
	
	\begin{proof}
		Take any point $(x,y) \in \mathcal{G}_{\frac{p}{q}}$. Then  $\psi((x,y)
		\geqslant \frac{p}{q}$. We consider the \emph{path} $\Phi_{(x,y)} :
		[0,1] \to \mathcal{P}$ defined by $\Phi_{(x,y)}(t) =  (x(1-t)+t,
		y(1-t))$ (which is the \emph{directed line segment} in $\mathcal{P}$
		joining $(x,y)$ to the \emph{focal point} $\mathcal{F}$). We now start
		to move the point $(x,y)$ along the path  $\Phi_{(x,y)}(t)$ towards the
		focal point $\mathcal{F}$ a study happens with the  value of $\psi$ as
		we move along. By Lemma \ref{distance:pairs} it follows that the value
		of $\psi$ decreases monotonically as we move along the path
		$\Phi_{(x,y)}$ towards the focal point $\mathcal{F}$. So, the set $
		\mathcal{T}_{\frac{p}{q}} = \{ t \in [0,1] | \psi(\Phi_{(x,y)}(t)) =
		\frac{p}{q} \}$ is non-empty and bounded below. Let $
		\tau_{\frac{p}{q}} = inf \mathcal{T}_{\frac{p}{q}}  $.
		
		Consider the function $\mathcal{D}_{\frac{p}{q}} :
		\mathcal{G}_{\frac{p}{q}} \times [0,1] \to \mathcal{G}_{\frac{p}{q}}$
		defined by:
		
		\begin{equation}
			\nonumber
			\mathcal{D}_{\frac{p}{q}}((x,y),t)=
			\begin{cases}
				
				\Phi_{(x,y)}(t)  &   \text{if $ t < \tau_{\frac{p}{q}} $}
				
				\\
				
				\Phi_{(x,y)}(\tau_{\frac{p}{q}}) & \text{if $ t \geqslant \tau_{\frac{p}{q}} $}\\	
			\end{cases}
		\end{equation}
		
		$\mathcal{D}_{\frac{p}{q}}$ is clearly continuous and is thus a
		deformation retract of $\mathcal{G}_{\frac{p}{q}}$ onto
		$\mathcal{I}_{\frac{p}{q}}$.
	\end{proof}	
	
	\begin{cor}\label{rational:connected}
		For a given rational number $\frac{p}{q}$  where $p$ and $q$ are
		coprime,  the Iso-over-rotation-tract $\mathcal{I}_{\frac{p}{q}}$ is
		connected.	
	\end{cor}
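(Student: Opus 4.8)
The plan is to assemble the corollary directly from the machinery already established, with no new computation required; the chain of implications is short. First I would recall Lemma \ref{upper:limit}, which says that $\mathcal{G}_{\frac{p}{q}}$ --- the locus of parameters $(\alpha,\beta)\in\mathcal{P}$ with $\psi((\alpha,\beta))\geqslant\frac{p}{q}$ --- is path-connected. (This in turn rests on Corollary \ref{lower:limit}, identifying $\mathcal{U}_{\frac{p}{q}}$ as the component of $\mathcal{P}-Z_{\frac{p}{q}}$ containing the focal point $\mathcal{F}$, together with Theorem \ref{L:structure:rational}, which tells us $Z_{\frac{p}{q}}$ is a staircase and hence itself path-connected; removing a path-connected ``staircase'' from the path-connected rectangle $\mathcal{P}$ leaves the complementary region $\mathcal{G}_{\frac{p}{q}}=\mathcal{P}-\mathcal{U}_{\frac{p}{q}}$ path-connected.)

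Next I would invoke Theorem \ref{greater:deformation}: the tract $\mathcal{I}_{\frac{p}{q}}=\psi^{-1}(\frac{p}{q})$ is a deformation retract of $\mathcal{G}_{\frac{p}{q}}$, realized by the explicit homotopy $\mathcal{D}_{\frac{p}{q}}$ that slides each parameter radially toward $\mathcal{F}$ and freezes it at the first instant $\tau_{\frac{p}{q}}$ it meets the level set $\psi=\frac{p}{q}$. Then apply Theorem \ref{deformation:connected}: a deformation retract of a path-connected space is path-connected, so $\mathcal{I}_{\frac{p}{q}}$ is path-connected. Since every path-connected space is connected, the corollary follows.

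There is essentially no obstacle at the level of the corollary itself --- it is a three-line consequence of Theorems \ref{greater:deformation}, \ref{deformation:connected} and Lemma \ref{upper:limit}. The genuine difficulties were resolved earlier: verifying that $Z_{\frac{p}{q}}$ is exactly a symmetric staircase with $2(q-2p+1)$ steps (Theorem \ref{L:structure:rational}), which depends on the \emph{floating-above} ordering of the bimodal over-twist orbits in $H_2$ established via itinerary comparisons (Lemmas \ref{floating:above:1}--\ref{floating:above:3} and Theorem \ref{compare:itinerary}); and checking continuity of the radial deformation $\mathcal{D}_{\frac{p}{q}}$, i.e.\ that $\tau_{\frac{p}{q}}$ varies continuously with the starting parameter, which is where the monotonicity of $\psi$ along lines through $\mathcal{F}$ (Lemma \ref{distance:pairs}) does the work. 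With those in hand, the proof of the corollary is merely the bookkeeping described above.
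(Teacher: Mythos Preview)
Your proposal is correct and follows essentially the same approach as the paper: invoke Lemma \ref{upper:limit} for the path-connectedness of $\mathcal{G}_{\frac{p}{q}}$, Theorem \ref{greater:deformation} for the deformation retraction onto $\mathcal{I}_{\frac{p}{q}}$, and Theorem \ref{deformation:connected} to conclude. The additional commentary you provide on the supporting lemmas is accurate but not part of the corollary's proof proper.
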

	
	\begin{proof}
		$\mathcal{I}_{\frac{p}{q}}$ is a deformation retract of
		$\mathcal{G}_{\frac{p}{q}}$	 From Theorem \ref{greater:deformation},
		for a given rational number $\frac{p}{q}$  where $p$ and $q$ are
		coprime, $\mathcal{I}_{\frac{p}{q}}$ is a deformation retract of
		$\mathcal{G}_{\frac{p}{q}}$. Also, from Lemma \ref{upper:limit}
		$\mathcal{G}_{\frac{p}{q}}$ is path-connected. Thus, from Theorem
		\ref{deformation:connected} the result follows.
	\end{proof}	
	
	Consider the map $\psi : \mathcal{P} \to [0, \frac{1}{2}]$. Combining
	results  Theorem \ref{degree:one:results} and \cite{BB2}
	we have:

	\begin{theorem}\label{psi:continuous}
		The map $\psi : \mathcal{P} \to [0, \frac{1}{2}]$ is continuous.
	\end{theorem}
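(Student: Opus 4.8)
The plan is to factor $\psi$ through the space $\mathcal{L}_1$ of liftings of degree one circle maps and then quote the continuity results collected in Theorem~\ref{degree:one:results}. By the results of \cite{BB2}, to every bimodal piecewise monotone map $g$ one can attach a lifting $F_g\in\mathcal{L}_1$ of a degree one circle map so that the left endpoint of the over-rotation interval of $g$ equals $\rho\bigl((F_g)_l\bigr)$ (up to a fixed order-preserving identification of over-rotation numbers with rotation numbers, if one is needed; this plays no role in the argument). Applying this with $g=H_{\alpha,\beta}$ we obtain, for every $(\alpha,\beta)\in\mathcal{P}$, a lifting $F_{\alpha,\beta}\in\mathcal{L}_1$ with $\psi(\alpha,\beta)=\rho\bigl((F_{\alpha,\beta})_l\bigr)$. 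Thus $\psi$ is the composition of $(\alpha,\beta)\mapsto F_{\alpha,\beta}$ with $F\mapsto\rho(F_l)$; since the second map is continuous on $\mathcal{L}_1$ in the uniform norm by Theorem~\ref{degree:one:results}(5) (equivalently, by part (4) and the continuity of $\rho$ on $\mathcal{L}_1'$), it will be enough to show that $(\alpha,\beta)\mapsto F_{\alpha,\beta}$ is continuous from $\mathcal{P}$ into $(\mathcal{L}_1,\|\cdot\|_\infty)$.

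First I would check that $(\alpha,\beta)\mapsto H_{\alpha,\beta}$ is continuous --- indeed Lipschitz --- into $C([0,1])$ with the uniform norm. This is routine: $H_{\alpha,\beta}$ is obtained from $H_2$ by flattening its graph to the height $\alpha$ over $\xi^{max}_{\alpha}$ and to the height $\beta$ over $\xi^{min}_{\beta}$, leaving it untouched elsewhere, and on $\mathcal{P}$ these two modifications affect disjoint portions of the interval; a direct inspection of the three pieces of the formula then gives $\|H_{\alpha,\beta}-H_{\alpha',\beta'}\|_\infty\le|\alpha-\alpha'|+|\beta-\beta'|$.

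The hard part will be to show that the assignment $g\mapsto F_g$ of \cite{BB2} is itself continuous in the map $g$, uniformly over the family $\mathcal{H}$; this is the step I expect to be the main obstacle. One has to unwind the construction of \cite{BB2}: the circle map model is built from $g$ by restricting it to subintervals cut out by the turning data, rescaling affinely, reflecting, and concatenating, and one must verify that each of these operations is continuous in the sup norm with a modulus uniform over $\mathcal{H}$, and that no continuity is lost when a turning interval shrinks (in particular as $(\alpha,\beta)$ tends to the base set $\mathcal{B}$, where $\psi\equiv\tfrac12$ and the associated liftings degenerate to ones of rotation number $\tfrac12$, consistently with the limiting value). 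For the specific family $H_{\alpha,\beta}$ the turning data are the flat-spot endpoints $\alpha/3,\ \tfrac23-\alpha/3,\ \tfrac23-\beta/3,\ \tfrac23+\beta/3$ and the plateau heights $\alpha$ and $\beta$, all affine in $(\alpha,\beta)$; combining this with the uniform continuity of $(\alpha,\beta)\mapsto H_{\alpha,\beta}$ from the previous step yields continuity of $(\alpha,\beta)\mapsto F_{\alpha,\beta}$. Although the verification is essentially bookkeeping --- a composition of manifestly continuous operations --- it is where the real work sits, since the construction of \cite{BB2} must be reproduced with continuity estimates attached.

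Granting these two points, $\psi=(F\mapsto\rho(F_l))\circ\bigl((\alpha,\beta)\mapsto F_{\alpha,\beta}\bigr)$ is a composition of continuous maps on all of $\mathcal{P}$, hence continuous, which is the theorem. If one prefers to avoid reworking the \cite{BB2} construction, an alternative is to read continuity off the staircase description already established: by Theorem~\ref{L:structure:rational} and Lemma~\ref{leading:order:rational} the superlevel sets $\{\psi\ge p/q\}$ are closed, nested, and separated by the staircases $Z_{p/q}$, whose distance from $\mathcal{F}$ increases with $p/q$, and together with Lemma~\ref{distance:pairs} (monotonicity of $\psi$ along rays from $\mathcal{F}$) this forces $\psi$ to be both upper and lower semicontinuous. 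I would, however, carry out the circle-map route, since it is the one for which the preliminaries were prepared.
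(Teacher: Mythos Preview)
Your proposal is correct and follows essentially the same route as the paper: the paper's entire argument is the one-line remark ``Combining results Theorem~\ref{degree:one:results} and \cite{BB2}'' preceding the statement, i.e., exactly the factorization $\psi=(F\mapsto\rho(F_l))\circ((\alpha,\beta)\mapsto F_{\alpha,\beta})$ that you spell out. You simply supply the details the paper leaves implicit (Lipschitz continuity of $(\alpha,\beta)\mapsto H_{\alpha,\beta}$ and continuity of the \cite{BB2} construction $g\mapsto F_g$), and your alternative semicontinuity argument via the staircases is a genuine addition beyond what the paper offers.
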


	From Corollary \ref{rational:connected} for each rational number
	$\frac{p}{q} \in [0, \frac{1}{2}]$, $\psi^{-1}(\frac{p}{q})$ is
	connected.

	Now, take an irrational number $\mu \in [0, \frac{1}{2}]$.
	We now show that $\psi^{-1}(\mu)$ is also connected.
	
	\begin{theorem}\label{irrational:connected}
		For each irrational number $\mu \in [0, \frac{1}{2}]$ the
		Iso-over-rotation-tract $\mathcal{I}_{\mu} =  \psi^{-1}(\mu)$ is
		connected.
	\end{theorem}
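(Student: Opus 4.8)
The idea is to approximate the irrational level $\mu$ by rational levels from above and below and to use the already-established structure of the rational iso-over-rotation-tracts together with the continuity of $\psi$ (Theorem~\ref{psi:continuous}). Fix an irrational $\mu\in(0,\tfrac12)$ (the case $\mu=0$ is trivial since $\psi^{-1}(0)=\{\mathcal F\}$ by the remarks following Definition~\ref{d:orint}, and there is no irrational in the degenerate boundary situation). Choose sequences of coprime rationals $\frac{p_n}{q_n}\uparrow\mu$ and $\frac{r_n}{s_n}\downarrow\mu$. The plan is to show $\mathcal I_\mu=\bigcap_n\bigl(\mathcal G_{p_n/q_n}\cap\mathcal U_{r_n/s_n}\bigr)$, or more precisely that $\mathcal I_\mu=\bigcap_n \mathcal G_{p_n/q_n}\setminus\bigcup_n\mathcal G_{r_n/s_n}$, and to exhibit $\mathcal I_\mu$ as a nested intersection of path-connected compact sets whose diameters transverse to the "radial" direction shrink, or alternatively to realize it as a deformation retract of the path-connected set $\mathcal G_\mu:=\{(\alpha,\beta):\psi(\alpha,\beta)\ge\mu\}$.

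First I would establish the analogues of the rational lemmas for the level $\mu$. By Lemma~\ref{distance:pairs}, along every ray emanating from the focal point $\mathcal F$ the function $\psi$ is monotone non-decreasing in the distance from $\mathcal F$; by Theorem~\ref{psi:continuous} it is continuous. Hence on each such ray the super-level set $\{\psi\ge\mu\}$ is a (possibly empty, possibly degenerate) closed sub-segment containing the far end, and the level set $\{\psi=\mu\}$ is a closed sub-segment of that ray. This gives two facts: (i) $\mathcal G_\mu=\{\psi\ge\mu\}$ is path-connected — indeed it is "star-shaped away from $\mathcal F$" in the sense that for each $(x,y)\in\mathcal G_\mu$ the entire radial segment from $(x,y)$ to the base set $\mathcal B$ lies in $\mathcal G_\mu$, and $\mathcal B\subset\mathcal G_\mu$ is connected, so one can connect any two points through $\mathcal B$; (ii) $\mathcal G_\mu=\bigcap_n\mathcal G_{p_n/q_n}$, a nested intersection of the path-connected sets $\mathcal G_{p_n/q_n}$ from Lemma~\ref{upper:limit}. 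For the second equality one uses that $\psi\ge\mu$ iff $\psi\ge\frac{p_n}{q_n}$ for all $n$, since $\frac{p_n}{q_n}\uparrow\mu$ and the values of $\psi$ form the full interval $[0,\tfrac12]$ (by Theorem~\ref{t:sharp} every rational in $[0,\tfrac12]$ is attained), so no value can sit strictly between $\sup_n\frac{p_n}{q_n}=\mu$ and $\mu$.

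Next I would construct the deformation retraction of $\mathcal G_\mu$ onto $\mathcal I_\mu=\psi^{-1}(\mu)$, imitating the proof of Theorem~\ref{greater:deformation} verbatim. For $(x,y)\in\mathcal G_\mu$ consider the radial path $\Phi_{(x,y)}(t)=(x(1-t)+t,\,y(1-t))$ toward $\mathcal F$; by Lemma~\ref{distance:pairs} the function $t\mapsto\psi(\Phi_{(x,y)}(t))$ is non-increasing, it equals $\psi(x,y)\ge\mu$ at $t=0$, and by continuity (Theorem~\ref{psi:continuous}) the set $\{t:\psi(\Phi_{(x,y)}(t))=\mu\}$ is non-empty (it is non-empty by the intermediate value theorem, since $\psi(\Phi_{(x,y)}(1))=\psi(\mathcal F)=0<\mu$) and closed; set $\tau=\tau_\mu(x,y)$ to be its infimum, equivalently the unique $t$ at which the value drops to $\mu$. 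Define
\begin{equation}
\nonumber
\mathcal D_\mu((x,y),t)=
\begin{cases}
\Phi_{(x,y)}(t) & \text{if } t<\tau_\mu(x,y),\\
\Phi_{(x,y)}(\tau_\mu(x,y)) & \text{if } t\ge\tau_\mu(x,y).
\end{cases}
\end{equation}
Continuity of $\mathcal D_\mu$ follows once one knows $(x,y)\mapsto\tau_\mu(x,y)$ is continuous, which in turn follows from continuity of $\psi$ and the fact that $\psi\circ\Phi$ is monotone in $t$ and achieves the value $\mu$ on a segment whose endpoints vary continuously. Then $\mathcal D_\mu$ is a deformation retraction of $\mathcal G_\mu$ onto $\mathcal I_\mu$ (each point of $\mathcal I_\mu$ has $\tau_\mu=0$ up to the radial degeneracy and is fixed; points of $\mathcal G_\mu$ are carried into $\mathcal I_\mu$). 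Since $\mathcal G_\mu$ is path-connected by step (i)/(ii), Theorem~\ref{deformation:connected} gives that $\mathcal I_\mu$ is path-connected, hence connected.

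\textbf{Main obstacle.} The delicate point is the continuity of the "stopping time" $\tau_\mu(x,y)$, i.e. showing that the single drop of $\psi$ to the level $\mu$ along radii moves continuously with the base point. In the rational case the tract $Z_{p/q}$ is an explicit staircase, so the stopping behavior is controlled by hand; for irrational $\mu$ one has no such explicit combinatorial description — the set $\mathcal I_\mu$ need not be a polygonal line, and a priori $\psi\circ\Phi_{(x,y)}$ could be constant equal to $\mu$ on a whole sub-segment (this happens precisely when the radial segment meets the tract in more than one point, which is possible when the tract contains radial pieces). The fix is to note that $\mathcal D_\mu$ is insensitive to which point of that constant sub-segment one lands on, because the map is then constant in $t$ past $\tau$; one argues continuity of $\mathcal D_\mu$ directly using uniform continuity of $\psi$ on the compact $\mathcal P$ and the monotonicity along radii, rather than continuity of $\tau_\mu$ as a function. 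Concretely: if $(x_k,y_k)\to(x,y)$ and $t_k\to t$, the images $\Phi_{(x_k,y_k)}(\min(t_k,\tau_k))$ stay within a small radial neighborhood, and any limit point lies on the radius through $(x,y)$ at a parameter where $\psi$ has dropped to exactly $\mu$ and not below — forcing it to equal $\mathcal D_\mu((x,y),t)$ whenever that target value is unambiguous, and when it is ambiguous (the constant sub-segment case) all such limit points give the same value under the rule "freeze at the first time $\mu$ is reached." Making this robustness argument clean is the only real work; everything else is a transcription of the rational-case machinery.
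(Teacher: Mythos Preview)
Your route differs from the paper's. The paper does not repeat the deformation-retract construction for irrational $\mu$; instead it writes $\mathcal{I}_\mu=\mathcal{P}\setminus(\mathcal{G}_\mu\cup\mathcal{U}_\mu)$ with $\mathcal{G}_\mu=\bigcup_{p/q>\mu}\mathcal{G}_{p/q}$ and $\mathcal{U}_\mu=\bigcup_{p/q<\mu}\mathcal{U}_{p/q}$, observes that these are increasing unions of path-connected sets sharing common points (hence connected), and concludes from this that the complement $\mathcal{I}_\mu$ is connected. Your plan of extending Theorem~\ref{greater:deformation} verbatim to the irrational level is more systematic, and your radial-to-$\mathcal{B}$ argument for path-connectedness of $\{\psi\ge\mu\}$ is in fact cleaner than what the paper does for $\mathcal{G}_{p/q}$.

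However, the fix you propose for the ``main obstacle'' does not work as stated. Suppose that on the ray through $(x,y)$ the function $t\mapsto\psi(\Phi_{(x,y)}(t))$ equals $\mu$ on a nondegenerate interval $[t_0,t_1]$, while on nearby rays through $(x_k,y_k)\to(x,y)$ it stays strictly above $\mu$ until some parameter $\tau_k$ close to $t_1$; continuity of $\psi$ and radial monotonicity alone do not rule this out. Then $\tau_\mu(x,y)=t_0$, and for any fixed $t\in(t_0,t_1)$ one has $\mathcal D_\mu((x,y),t)=\Phi_{(x,y)}(t_0)$, yet eventually $t<\tau_k$, so $\mathcal D_\mu((x_k,y_k),t)=\Phi_{(x_k,y_k)}(t)\to\Phi_{(x,y)}(t)\ne\Phi_{(x,y)}(t_0)$. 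Thus $\mathcal D_\mu$ is genuinely discontinuous in this scenario; the clause ``all such limit points give the same value under the rule `freeze at the first time $\mu$ is reached'\,'' is precisely what fails, because the first-hitting time is only lower semicontinuous. (Switching to the \emph{last} hitting time repairs upper semicontinuity but then no longer fixes $\mathcal{I}_\mu$ pointwise.) The ingredient you are missing is specific to irrational $\mu$: via \cite{BB2} and Theorem~\ref{degree:one:results}, the value $\psi$ along a ray is the rotation number of a monotone one-parameter family of non-decreasing degree-one lifts, and rotation-number plateaus of such families occur only at rational values. Hence for irrational $\mu$ the level set meets each ray in a single point, $\tau_\mu$ is continuous, and your deformation retraction is well defined. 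Without invoking this fact, the continuity argument you sketch cannot be completed.
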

	
	\begin{proof}
		Consider the sets $\mathcal{G}_{\mu} = \displaystyle
		\bigcup_{\frac{p}{q} > \mu} \mathcal{G}_{\frac{p}{q}}$ and
		$\mathcal{U}_{\mu} = \displaystyle \bigcup_{\frac{p}{q} < \mu    }
		\mathcal{U}_{\frac{p}{q}}$. It follows from Theorem \ref{lower:limit}
		and Corollary \ref{upper:limit} that the sets $\mathcal{G}_{\mu}$ and
		$\mathcal{U}_{\mu}$ are connected. Hence, $\mathcal{A} = \mathcal{P} -
		(\mathcal{G}_{\mu} \cup  \mathcal{U}_{\mu})$ is connected. From Theorem
		\ref{psi:continuous} it readily follows that $\mathcal{A}$ is precisely
		the set of points $(x,y)$ of $\mathcal{P}$ such that $\psi((x,y))= \mu$
		and hence the result follows.
	\end{proof}
	
	From Corollary \ref{rational:connected} and Theorem
	\ref{irrational:connected} we finally have the main result of this
	paper:
	
	\begin{cor}\label{main:result}
		The map $\psi : \mathcal{P} \to [0, \frac{1}{2}]$ is monotone.
	\end{cor}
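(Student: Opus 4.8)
The plan is to reduce the statement to the definition of a monotone map. Recall that a continuous surjection $\psi\colon X\to Y$ is called \emph{monotone} exactly when the fiber $\psi^{-1}(\nu)$ is connected for every $\nu\in Y$. So the first step is to record that $\psi\colon\mathcal P\to[0,\tfrac12]$ is continuous by Theorem \ref{psi:continuous}, and that it is onto: $\psi(\mathcal F)=0$, $\psi\equiv\tfrac12$ on the base set $\mathcal B$, and $\mathcal P$ is connected, so $\psi(\mathcal P)$ is a subinterval of $[0,\tfrac12]$ containing both endpoints and hence equals all of $[0,\tfrac12]$.

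The second step is to dispose of the fibers case by case. If $\nu=\tfrac pq$ is rational, written in lowest terms, then $\psi^{-1}(\nu)=\mathcal I_{\frac pq}$ is connected by Corollary \ref{rational:connected}. If $\nu=\mu$ is irrational, then $\psi^{-1}(\mu)=\mathcal I_\mu$ is connected by Theorem \ref{irrational:connected}. Since every $\nu\in[0,\tfrac12]$ is either rational or irrational, all fibers of $\psi$ are connected, and therefore $\psi$ is a monotone map, which is precisely the assertion of the corollary.

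There is no real obstacle left at this point: all the substantive work lies in Corollary \ref{rational:connected} and Theorem \ref{irrational:connected}, whose proofs in turn rest on the staircase description of the leading sets $Z_{\frac pq}$ (Theorem \ref{L:structure:rational}), on the deformation-retract construction $\mathcal D_{\frac pq}$ (Theorem \ref{greater:deformation}), and on the continuity of $\psi$. The only mild care needed is with the extreme values $\nu=0$ and $\nu=\tfrac12$: the fiber over $0$ contains the focal point $\mathcal F$ and the fiber over $\tfrac12$ is the base set $\mathcal B$; both are manifestly connected and both are already covered by the general rational argument, so nothing new is required. Hence the corollary follows at once from the two connectedness results together with Theorem \ref{psi:continuous}.
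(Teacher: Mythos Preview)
Your proof is correct and follows essentially the same approach as the paper: the corollary is stated immediately after Corollary \ref{rational:connected} and Theorem \ref{irrational:connected} and is deduced directly from them, just as you do. Your additional remarks on surjectivity and on the extreme values $\nu=0,\tfrac12$ are reasonable clarifications but not part of the paper's (one-line) argument.
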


\end{document}